\theoremstyle{plain}
\newtheorem{thm}{Theorem}
\newtheorem{lem}[thm]{Lemma}
\newtheorem{conj}[thm]{Conjecture}
\newcommand{\leqnomode}{\tagsleft@true}
\newcommand{\reqnomode}{\tagsleft@false}
\newcommand{\cB}{\mathcal{B}}
\newcommand{\cI}{\mathcal{I}}
\newcommand{\cJ}{\mathcal{J}}
\newcommand{\cC}{\mathcal{C}}
\newcommand{\cups}{\cup \dots \cup} 
\newcommand{\overbar}[1]{\mkern 3mu\overline{\mkern-3mu#1\mkern-3mu}\mkern 3mu}
\newcommand{\dbloverline}[1]{\overbar{\dbl@overline{#1}}}
\newcommand{\dbl@overline}[1]{\mathpalette\dbl@@overline{#1}}
\newcommand{\dbl@@overline}[2]{%
  \begingroup
  \sbox\z@{$\m@th#1\overbar{#2}$}%
  \ht\z@=\dimexpr\ht\z@-1.5\dbl@adjust{#1}\relax
  \box\z@
  \ifx#1\scriptstyle\kern-\scriptspace\else
  \ifx#1\scriptscriptstyle\kern-\scriptspace\fi\fi
  \endgroup
}
\newcommand{\dbl@adjust}[1]{%
  \fontdimen8
  \ifx#1\displaystyle\textfont\else
  \ifx#1\textstyle\textfont\else
  \ifx#1\scriptstyle\scriptfont\else
  \scriptscriptfont\fi\fi\fi 3
}
\def\final{0}  
\def\iflong{\iffalse}
\newcommand{\knote}[1]{{\color{red}[{\tiny \textbf{Kristóf:} \bf #1}]\marginpar{\color{red}*}}}
\newcommand{\tnote}[1]{{\color{blue}[{\tiny \textbf{Tamás:} \bf #1}]\marginpar{\color{blue}*}}}
\newcommand{\knote}[1]{}
\newcommand{\tnote}[1]{}
\title{Rainbow and monochromatic circuits and cuts in binary matroids}
\author{
Kristóf Bérczi\thanks{MTA-ELTE Momentum Matroid Optimization Research Group and MTA-ELTE Egerv\'ary Research Group, Department of Operations Research, E\"otv\"os Lor\'and University, Budapest, Hungary. Email: \texttt{kristof.berczi@ttk.elte.hu}.}
\and
Tamás Schwarcz\thanks{MTA-ELTE Momentum Matroid Optimization Research Group, Department of Operations Research, E\"otv\"os Lor\'and University, Budapest, Hungary. Email: \texttt{tamas.schwarcz@ttk.elte.hu}.}
}
\begin{document}
\maketitle

\begin{abstract}
Given a matroid together with a coloring of its ground set, a subset of its elements is called rainbow colored if no two of its elements have the same color. We show that if an $n$-element rank $r$ binary matroid $M$ is colored with exactly $r$ colors, then $M$ either contains a rainbow colored circuit or a monochromatic cut. As the class of binary matroids is closed under taking duals, this immediately implies that if $M$ is colored with exactly $n-r$ colors, then $M$ either contains a rainbow colored cut or a monochromatic circuit. As a byproduct, we give a characterization of binary matroids in terms of reductions to partition matroids. 

Motivated by a conjecture of B\'erczi et al., we also analyze the relation between the covering number of a binary matroid and the maximum number of colors or the maximum size of a color class in any of its rainbow circuit-free colorings. For simple graphic matroids, we show that there exists a rainbow circuit-free coloring that uses each color at most twice only if the graph is $(2,3)$-sparse, that is, it is independent in the $2$-dimensional rigidity matroid. Furthermore, we give a complete characterization of minimally rigid graphs admitting such a coloring.

\medskip

\noindent \textbf{Keywords:} Binary matroids, Rainbow circuit-free colorings, Covering number
\end{abstract}

\section{Introduction} \label{sec:intro}

Matroids play a crucial role in optimization problems due to their high level of abstraction that enables them to represent various combinatorial objects. In many cases, however, the underlying matroidal structure is still difficult to work with, and a simpler matroid is required that provides a `good' approximation of the original one. In this paper, we consider colorings of matroids that defines simpler partition matroids providing such an approximation. 

Given a matroid together with a coloring of its ground set, a subset of its elements is called \textbf{rainbow colored} if it does not contain two elements of the same color. Accordingly, a coloring is called \textbf{rainbow circuit-free} or \textbf{rainbow cut-free} if no circuit or cut is rainbow colored, respectively.

Every loopless matroid of rank $r$ has a rainbow circuit-free coloring with exactly $r$ colors by the following construction: for $i=r,r-1\dots,1$, let $S_i$ be a cut of the matroid $M|(S-\bigcup_{j>i} S_j)$, where $|$ denotes the restriction operator. Note that this way $S_i$ is a cut of the rank $i$ matroid $M|(S_1 \cups S_i)$, motivating the reversed ordering of the indices. We call colorings obtained by this construction \textbf{standard}.\footnote{Standard colorings were previously defined for graphs in \cite{hoffman2019rainbow}; we extend this notion for arbitrary matroids.} We show in Section~\ref{sec:standard} that standard colorings are indeed rainbow circuit-free. Obviously, if the matroid contains a loop then no rainbow circuit-free coloring exists. Therefore every matroid considered in the paper is assumed to be loopless without explicitly mentioning this. Nevertheless, parallel elements might exist. 

It is worth mentioning that not every rainbow circuit-free coloring is standard. To illustrate this, consider the uniform matroid $U_{2,4}$ of rank two on four elements. As every circuit consists of three elements, any coloring of the matroid with two colors is rainbow circuit-free. However, if both color classes contain two elements then no monochromatic cut exists, so the coloring cannot be obtained by the above algorithm.

\paragraph{\it Previous work}

Simplifications of matroids appeared already in the late 60's when Crapo and Rota \cite{crapo1968combinatorial} introduced the notion of weak maps. Following the terminology of \cite{berczi2019list}, given two matroids $N$ and $M$ on the same ground set, $N$ is said to be a \textbf{reduction} of $M$ if every independent set of $N$ is also independent in $M$. If, furthermore, the rank of $N$ coincides with that of $M$, then $N$ is a \textbf{rank-preserving reduction} of $M$. In terms of weak maps, a reduction corresponds to the identity weak map on the common ground set of the two matroids.

Lucas \cite{lucas1974properties,lucas1975weak} studied rank-preserving reductions of binary matroids together with the behavior of certain invariants under reductions, such as the Tutte polynomial, Whitney numbers, or the M\"obius function. For further results and remarks on weak images, we refer the interested reader to \cite{kung1986weak}.

Hoffman et al. \cite{hoffman2019rainbow} considered rainbow cycle-free edge colorings of finite graphs. They proved that every rainbow cycle-free coloring of a connected graph on $n$ vertices with $n-1$ colors implies a monochromatic edge cut in $G$. They also showed that every such coloring can be obtained by taking and removing cuts from the graph sequentially -- in an analogous way as described in the introduction.

In a recent paper, Im et al. \cite{im2020matroid} studied the so-called matroid intersection cover problem, a special case of set cover where the sets are derived from the intersection of matroids. Their approach is based on partition decompositions of matroids, a generalization of reductions to partition matroids. They gave polynomial-time algorithms to compute such partition decompositions for several matroid classes that commonly arise in combinatorial optimization problems. 

In an independent work \cite{berczi2019list}, B\'erczi et al. investigated the list coloring number of the intersection of two matroids. A key tool in their approach was finding a reduction of a matroid to a partition matroid without increasing its coloring number too much. They proved that such a reduction exists for paving, graphic matroids, and gammoids -- for all those cases, they verified the existence of a reduction into a partition matroid with coloring number at most twice that of the original matroid.

\paragraph{\it Our results}

Rank-preserving reductions define a natural partial order on the set of matroids. It is not difficult to check that the minimal elements with respect to this partial order correspond to partition matroids with upper bounds one on the partition classes, therefore the structurally simplest approximations of any matroid fall in this class. This motivates the investigation of reductions from an arbitrary matroid to a partition matroid; we will see that reductions of such form correspond to rainbow circuit-free colorings. In this paper, we concentrate on binary matroids.

Our first main result is the following alternative theorem.  

\begin{restatable}{thm}{thmrainbow}
\label{thm:rainbow}
Let $M$ be an $n$-element binary matroid of rank $r$.  
\begin{enumerate}
    \item If the ground set of $M$ is colored with exactly $r$ colors, then $M$ either contains a rainbow circuit or a monochromatic cut.
    \item If the ground set of $M$ is colored with exactly $n-r$ colors, then $M$ either contains a rainbow cut or a monochromatic circuit.
\end{enumerate}
\end{restatable}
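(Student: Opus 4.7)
Part 2 should follow from Part 1 by matroid duality: the dual of a binary matroid is binary, circuits of $M^*$ are cuts of $M$ and vice versa, and $r(M^*) = n - r$. Applying Part 1 to $M^*$ with its $n - r$ colors yields either a rainbow circuit of $M^*$ (a rainbow cut of $M$) or a monochromatic cut of $M^*$ (a monochromatic circuit of $M$), exactly as Part 2 asserts.

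For Part 1, suppose $M$ contains no rainbow circuit. First I would extract a rainbow basis $B = \{e_1, \dots, e_r\}$ with $e_i \in S_i$: take a maximum rainbow independent set $I$; if $|I| < r$ then some class $S_j$ misses $I$, and adding any $e \in S_j$ produces a rainbow set that must still be independent (otherwise it would contain a rainbow circuit), contradicting maximality. I would then exploit binary-ness on fundamental circuits. For each $f \notin B$, the fundamental circuit $C(B, f)$ is non-rainbow by assumption, and since the elements of $B$ already carry distinct colors, the repetition must involve $f$ itself, forcing $e_{c(f)} \in C(B, f)$. The standard identity $e_i \in C(B, f) \Leftrightarrow f \in C^*(B, e_i)$ then yields $S_i \subseteq C^*(B, e_i)$ for every $i$. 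If equality holds for some $i$, then $S_i$ is itself a cocircuit and we have the desired monochromatic cut.

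Otherwise I would finish by a case distinction. If some class is a singleton $\{f\}$, the contraction $M/f$ is a binary matroid of rank $r - 1$ colored with $r - 1$ colors that is still rainbow-circuit-free (the color of $f$ appears nowhere else, so any rainbow circuit of $M/f$ would arise from a non-rainbow circuit of $M$), and induction on $r$ provides a monochromatic cut of $M/f$, which lifts to $M$ because cocircuits of $M/f$ are exactly cocircuits of $M$ avoiding $f$. In the remaining case every class has at least two elements; for each $i$ select some $f_i \in C^*(B, e_i) \setminus S_i$, defining a fixed-point-free map $\phi\colon [r] \to [r]$ by $\phi(i) = c(f_i)$, which must contain a directed cycle $i_1 \to \dots \to i_k \to i_1$. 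The symmetric difference $D = C(B, f_{i_1}) \triangle \cdots \triangle C(B, f_{i_k})$ is a cycle whose non-basis part is the rainbow set $\{f_{i_1}, \dots, f_{i_k}\}$, and in which each $e_{i_t}$ belongs to both $C(B, f_{i_t})$ and $C(B, f_{i_{t-1}})$ (indices mod $k$) and so cancels. By the defining property of binary matroids---that any cycle is a disjoint union of circuits---$D$ should then contain a rainbow circuit, contradicting the hypothesis.

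The principal difficulty I anticipate is controlling stray occurrences of basis elements $e_{i_t}$ in fundamental circuits $C(B, f_{i_s})$ for $s$ outside $\{t, t-1\}$: such terms could survive cancellation and spoil the rainbow-ness of $D$. I would try to handle this by taking a minimum-length cycle of $\phi$, by a basis-exchange argument that shrinks the relevant fundamental circuits before selecting the cycle, or by an inductive potential that decreases whenever such obstructions occur. This is also the step where the binary hypothesis is essential, consistent with the failure of the analogous statement for $U_{2,4}$, and where the byproduct characterization of binary matroids via reductions to partition matroids advertised in the abstract ought to emerge.
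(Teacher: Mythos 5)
Your route is genuinely different from the paper's. The paper deduces Part~1 from Lucas's structure theorem on rank-preserving reductions of binary matroids (Theorem~\ref{thm:Lucas}): the color classes define a partition matroid $N\preceq_r M$, and induction on $|S|$ through the flat $F$ with $N\preceq_r M|F\oplus M/F\preceq_r M$ produces a monochromatic cut of $M/F$, hence of $M$. Your argument is elementary and self-contained. The duality reduction for Part~2 coincides with the paper's, the extraction of a rainbow basis $B$ is correct, and the observation that a non-rainbow fundamental circuit $C(B,f)$ must contain $e_{c(f)}$, hence $S_i\subseteq C^*(B,e_i)$ for all $i$, is exactly right; note that the case $S_i=C^*(B,e_i)$ is precisely the conclusion of Corollary~\ref{cor:rainbow}, since a cut meets the rainbow basis $B$ in the single element $e_i$ and a cocircuit meeting a basis in one element is that element's fundamental cocircuit.

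The gap is the one you flag yourself: stray occurrences of $e_{i_t}$ in $C(B,f_{i_s})$ for $s\notin\{t,t-1\}$. As written, your fix does not quite close it, because a ``minimum-length cycle of $\phi$'' lives in a functional graph whose out-edges were fixed in advance, so a stray occurrence contradicts nothing. The repair is to postpone the choice of witnesses: define a digraph $H$ on the color indices with an arc $i\to j$ whenever some $f\in C^*(B,e_i)-S_i$ has color $j$ (necessarily $j\neq i$, since $c(f)=i$ would force $f\in S_i$). If no class equals its fundamental cocircuit, every vertex of $H$ has positive out-degree, so $H$ contains a shortest directed cycle $i_1\to\dots\to i_k\to i_1$ with $k\ge 2$; only now fix witnesses $f_{i_t}$ along its arcs. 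A stray occurrence $e_{i_t}\in C(B,f_{i_s})$ with $s\notin\{t,t-1\}$ means $f_{i_s}\in C^*(B,e_{i_t})-S_{i_t}$, i.e.\ $H$ has the chord $i_t\to i_{s+1}$ with $i_{s+1}\notin\{i_t,i_{t+1}\}$, and that chord followed by the cycle path from $i_{s+1}$ back to $i_t$ is a directed cycle of length at most $k-1$ (because $s\neq t$), contradicting minimality. Hence each $e_{i_t}$ appears exactly twice and cancels in $D$, every surviving basis element carries a color outside $\{i_1,\dots,i_k\}$, and $D$ is a nonempty rainbow disjoint union of circuits, yielding the forbidden rainbow circuit. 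With this modification your proof is complete, and your singleton-class contraction step becomes superfluous, since the dichotomy $S_i=C^*(B,e_i)$ versus $S_i\subsetneq C^*(B,e_i)$ already covers it. What the paper's route buys is that Corollary~\ref{cor:rainbow} and the standard-coloring machinery fall out with no extra work; what yours buys is independence from Lucas's theorem.
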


As a consequence, we show that the structure of rainbow circuit-free and rainbow cut-free colorings is rather restricted.

\begin{restatable}{ocor}{corrainbow}
\label{cor:rainbow}
Let $M$ be an $n$-element binary matroid of rank $r$. 
\begin{enumerate}
    \item If the ground set of $M$ is colored with exactly $r$ colors and $M$ has no rainbow circuit, then one of the color classes is a cut of $M$. \label{eq:cor1}
    \item If the ground set of $M$ is colored with exactly $n-r$ colors and $M$ has no rainbow cut, then one of the color classes is a circuit of $M$.
\end{enumerate}
\end{restatable}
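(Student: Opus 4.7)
Both parts will follow from Theorem~\ref{thm:rainbow} by a common squeeze argument, with the second part reduced to the first via matroid duality, so the real work lies in part~\ref{eq:cor1}. The plan is to sandwich the monochromatic cut produced by Theorem~\ref{thm:rainbow} between itself and a fundamental cocircuit of a well-chosen rainbow basis, and then invoke the fact that cocircuits are minimal cocycles (so no cocircuit can properly contain another) to collapse the sandwich to equality.

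Concretely, I would first observe that whenever $M$ has rank $r$ and the coloring uses exactly $r$ colors, every rainbow transversal of the color classes is a basis of $M$: such a transversal has size $r$ and, by rainbow circuit-freeness, is independent. Fix such a rainbow basis $B=\{e_1,\dots,e_r\}$ with $e_i\in S_i$. For any $e'\in S_i\setminus\{e_i\}$, the swap $B-e_i+e'$ is again a rainbow transversal and therefore a basis, forcing $e'\notin\operatorname{cl}(B-e_i)$. Since $\operatorname{cl}(B-e_i)$ is a hyperplane, its complement is the fundamental cocircuit $C^*_{e_i}(B)$, and we conclude
\[
S_i\subseteq C^*_{e_i}(B)\qquad\text{for every }i.
\]

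Next I would apply Theorem~\ref{thm:rainbow}: under the hypotheses of part~\ref{eq:cor1}, some color class $S_i$ contains a cut $C$ of $M$. Combining with the previous step yields $C\subseteq S_i\subseteq C^*_{e_i}(B)$, and since $C$ and $C^*_{e_i}(B)$ are both cocircuits of $M$ with one contained in the other, they must coincide; therefore $S_i=C$ is a cut of $M$, proving part~\ref{eq:cor1}. For the second statement, I would apply part~\ref{eq:cor1} to the dual matroid $M^*$, which is binary of rank $n-r$: a coloring of $M$ with $n-r$ colors is a coloring of $M^*$ with exactly $r(M^*)$ colors, rainbow cuts of $M$ are exactly rainbow circuits of $M^*$, and cuts of $M^*$ are circuits of $M$, so part~\ref{eq:cor1} applied to $M^*$ delivers a color class that is a circuit of $M$.

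The argument is quite soft once Theorem~\ref{thm:rainbow} is in hand: essentially all the matroid-specific content, including the binary hypothesis, is absorbed into that theorem, and the sandwiching step used here is valid in any matroid once a monochromatic cocircuit inside some color class is guaranteed.
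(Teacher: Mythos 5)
Your proof is correct and follows essentially the same route as the paper: both reduce to part~1 by duality, invoke Theorem~\ref{thm:rainbow} to obtain a monochromatic cut $C\subseteq S_i$, and then use the fact that rainbow transversals of the color classes are bases to force $C=S_i$. The only difference is cosmetic: the paper closes by noting that a proper containment $C\subsetneq S_i$ would let every color class meet $S-C$ and hence yield a rainbow basis disjoint from the cut $C$, while you sandwich $S_i$ between $C$ and a fundamental cocircuit of a rainbow basis and appeal to minimality of cocircuits; the two closing arguments are interchangeable.
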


\begin{restatable}{ocor}{corstandard}
\label{cor:standard} 
Every rainbow circuit-free coloring of a rank $r$ binary matroid with $r$ colors is standard. In particular, one of the color classes consists of parallel elements.
\end{restatable}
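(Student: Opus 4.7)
The plan is to prove the corollary by induction on the rank $r$, using Corollary \ref{cor:rainbow}(\ref{eq:cor1}) as the engine of the induction. For the base case $r=1$, there is a single color class equal to the entire ground set; since $M$ is loopless of rank one, every pair of elements is parallel, the color class is a cut (the whole ground set), and the trivial construction yields a standard coloring.

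For the inductive step, suppose $M$ has rank $r\ge 2$ and is equipped with a rainbow circuit-free coloring using exactly $r$ colors. By Corollary~\ref{cor:rainbow}(\ref{eq:cor1}), at least one color class is a cut of $M$; designate any such class as $S_r$. Then $S-S_r$ is a hyperplane, so $M':=M|(S-S_r)$ is a binary matroid of rank $r-1$. The induced coloring of $M'$ uses exactly $r-1$ colors, because each remaining color class is nonempty (the original coloring used all $r$ colors) and is disjoint from $S_r$. Moreover, any circuit of $M'$ is a circuit of $M$, so the induced coloring of $M'$ is rainbow circuit-free. By the inductive hypothesis it is standard: there is an ordering $S_1,\ldots,S_{r-1}$ of its color classes such that for each $i<r$, $S_i$ is a cut of $M'|\bigl((S-S_r)-\bigcup_{i<j<r}S_j\bigr)=M|\bigl(S-\bigcup_{j>i}S_j\bigr)$. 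Combined with $S_r$ being a cut of $M=M|\bigl(S-\bigcup_{j>r}S_j\bigr)$, the ordering $S_1,\ldots,S_r$ witnesses that the original coloring is standard.

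The "in particular" statement follows by unwinding the recursion to its base. The class $S_1$ obtained at the bottom of the induction is the entire ground set of the rank-one loopless binary matroid $M|S_1$, and in any loopless rank-one matroid all elements are parallel.

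The only subtlety I anticipate is bookkeeping: verifying that the restriction $M'$ is still binary of rank exactly $r-1$, still rainbow circuit-free under the inherited coloring, and still uses exactly $r-1$ colors. All three facts are immediate (respectively, by closure of binary matroids under restriction, by the definition of a cut as the complement of a hyperplane, by inclusion of circuit families, and by the assumption that every color was originally used). So the real content of the corollary is concentrated in Corollary~\ref{cor:rainbow}(\ref{eq:cor1}), with no further obstacle in the inductive passage.
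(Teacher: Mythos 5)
Your proof is correct and follows essentially the same route as the paper: the paper's one-sentence argument is exactly the iterated application of Corollary~\ref{cor:rainbow}\eqref{eq:cor1} to successive restrictions (using that restrictions of binary matroids are binary), which you have simply unwound as an explicit induction on the rank. The bookkeeping you flag (the restriction to the hyperplane $S-S_r$ is binary of rank $r-1$, inherits a rainbow circuit-free coloring with exactly $r-1$ colors) is handled correctly.
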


Somewhat surprisingly, the reverse implication also holds, therefore we get a characterization of binary matroids.

\begin{restatable}{thm}{thmstandard}
\label{thm:standard}
A matroid of rank $r$ is binary if and only if each of its rainbow circuit-free colorings with exactly $r$ colors is standard.
\end{restatable}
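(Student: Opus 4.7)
The forward direction follows at once from Corollary~\ref{cor:standard}. For the reverse I prove the contrapositive: if $M$ is not binary then some rainbow circuit-free $r$-coloring of $M$ is not standard. By Tutte's excluded minor theorem $M$ has a $U_{2,4}$ minor $M/I\setminus D$ on four elements $\{f_1,f_2,f_3,f_4\}$ with $I$ independent. A standard exchange argument—absorbing into $I$ any element of $D$ lying outside the closure of $\{f_1,f_2\}$ in $M/I$—reduces to $|I|=r-2$ and $r(M/I)=2$. A second reduction lets me assume $M$ has no coloops, since any coloop can be peeled off at any point of a standard ordering as a singleton cocircuit, so attaching coloops as singleton classes to a non-standard coloring of the coloop-free deletion preserves non-standardness.

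Set $F=\mathrm{cl}_M(I)$, a rank-$(r-2)$ flat with $I$ as a basis. On $F$ take a standard $(r-2)$-coloring $\varphi_F$ of $M|F$ constructed so that at each stage the cocircuit peeled is a fundamental cocircuit of the current basis; this guarantees that each $i\in I$ receives its own color $c_i\in\{c_1,\dots,c_{r-2}\}$. Give $\{f_1,f_2\}$ the new color $c_{r-1}$ and $\{f_3,f_4\}$ the new color $c_r$. Each non-loop $d\in D$ of $M/I$ takes $c_{r-1}$ if its parallel class in $M/I$ is that of $f_1$ or $f_2$, takes $c_r$ if it is that of $f_3$ or $f_4$, and is assigned either color by a fixed convention otherwise.

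For rainbow circuit-freeness I let $C$ be a circuit of $M$ and write $C_F=C\cap F$, $C_N=C\setminus F$. When $C_N=\emptyset$ the claim follows from $\varphi_F$. When $|C_N|=1$ the flatness of $F$ combined with $r_M(C)=|C|-1$ forces the unique element of $C_N$ into $\mathrm{cl}_M(C_F)\subseteq F$, a contradiction. When $|C_N|\geq 3$, pigeonhole on the two colors $c_{r-1},c_r$ assigned to parallel classes of $M/I$ yields two elements of $C_N$ sharing a color. The delicate subcase is $|C_N|=2$ with $e_1,e_2$ in distinct parallel classes of $M/I$: from $r_M(C_F\cup\{e_1\})=|C_F|+1$ (since $e_1\notin F$ and $F$ is a flat) and $r_M(C)=|C_F|+1$, we get $e_2\in\mathrm{cl}_M(C_F\cup\{e_1\})\subseteq\mathrm{cl}_M(I\cup\{e_1\})$; projecting to $M/I$ this means $e_2$ lies in the parallel class of $e_1$, contradicting the assumption.

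For non-standardness, note that neither $\{f_1,f_2\}\cup X_{r-1}$ nor $\{f_3,f_4\}\cup X_r$ is a cocircuit of $M$, because the complement of each in $E(M)$ contains a basis of $M$ of the form $I\cup\{f_j,f_k\}$ for a pair from the other side. So any valid standard ordering must begin by peeling a $\varphi_F$-class, and after $r-2$ valid peelings of $\varphi_F$-classes the remaining matroid is a rank-$2$ restriction of $M$ whose two remaining color classes contain the independent pairs $\{f_1,f_2\}$ and $\{f_3,f_4\}$ of non-parallel elements in $M$; a cocircuit of a rank-$2$ matroid is the complement of a single parallel class, but neither of these pairs is a parallel class, so the peeling is stuck. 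The main obstacle is the rainbow-freeness verification in the subcase $|C_N|=2$: a non-binary matroid does not admit the symmetric-difference characterization of circuits available in the binary case, so the argument must essentially combine the rank-$2$ structure of $M/I$ with the flatness of $F$ to force the closure computation above.
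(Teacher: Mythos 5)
Your construction is sound and it takes a genuinely different route from the paper's. The paper finds a flat $F$ with $(M/F)|T\cong U_{2,4}$, two-colors the rank-two flat $G=\mathrm{cl}_{M/F}(T)$ by grouping parallel classes, and then lifts this non-standard coloring to $M$ by two applications of a lifting lemma (Lemma~\ref{lem:minor}), which works through the rank-preserving reduction $M|F\oplus M/F\preceq_r M$ together with Lemma~\ref{lem:restriction}; rainbow circuit-freeness then comes for free from the reduction. You instead normalize the minor so that $M/I$ itself has rank two, put a standard coloring on $F=\mathrm{cl}_M(I)$ and a two-coloring of $S-F$ that is constant on parallel classes of $M/I$ and separates $\{f_1,f_2\}$ from $\{f_3,f_4\}$, and verify everything by hand. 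The normalization to $r(M/I)=2$ and the case analysis for rainbow circuit-freeness are correct; in particular the subcase $|C\setminus F|=2$ correctly forces the two elements into a common parallel class of $M/I$, which is exactly the work that the paper's direct-sum reduction does invisibly. What your route buys is independence from the Lemma~\ref{lem:minor} machinery; what it costs is the explicit circuit analysis and a more delicate non-standardness check.

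That check is where the one genuine gap sits. You show that neither $A=\{f_1,f_2\}\cup X_{r-1}$ nor $B=\{f_3,f_4\}\cup X_r$ is a cocircuit of $M$, so the first peeled class must be a $\varphi_F$-class; but you then assert that ``after $r-2$ valid peelings of $\varphi_F$-classes'' the process reaches the rank-two remainder. This needs $A$ and $B$ to fail to be cuts of every intermediate restriction, not just of $M$, and the witness basis $I\cup\{f_j,f_k\}$ you exhibit need not survive the peeling, since peeled $\varphi_F$-classes may contain elements of $I$. The claim is true and the hole is fillable: after $m$ valid peelings, necessarily all of $\varphi_F$-classes, the surviving part $F'$ of $F$ is a union of $r-2-m$ nonempty color classes and hence has rank at least $r-2-m$ by rainbow-freeness, so $r_M(F'\cup\{f_3,f_4\})\ge r_M(F')+r_{M/F}(\{f_3,f_4\})\ge r-m$, which is the current rank; this set avoids $A$, so $A$ misses a basis of the current restriction and is not a cut of it (symmetrically for $B$). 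Since one of $A,B$ would have to be peeled while the other is still present, no standard ordering exists. Without this inductive step your argument does not exclude a standard ordering that interleaves $A$ or $B$ among the $\varphi_F$-classes.
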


We provide several applications of the above results. As a local counterpart of strongly base orderability, we introduce the notion of  locally strongly base orderable basis-pairs and characterize those for binary matroids.

\begin{restatable}{thm}{thmlsbo} 
\label{thm:lsbo}
Let $M$ be a binary matroid and let $B_1$ and $B_2$ be bases of $M$ such that $S=B_1\cup B_2$. Then $B_1$ and $B_2$ are locally strongly base orderable if and only if $M/(B_1\cap B_2)$ has a standard coloring with color classes of size two. In particular, if $B_1$ and $B_2$ are disjoint and $M$ is simple, then no such bijection exists.
\end{restatable}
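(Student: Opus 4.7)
The plan is to reduce to the case $B_1 \cap B_2 = \emptyset$ and then translate local strong base orderability into the language of rainbow circuit-free pair-colorings. Setting $M' = M/(B_1\cap B_2)$ and $B'_i = B_i \setminus B_{3-i}$, the bases $B'_1, B'_2$ are disjoint, have union equal to the ground set of the binary matroid $M'$, and have rank $r' := r - |B_1\cap B_2|$. Since any bijection $\varphi : B_1 \to B_2$ witnessing local strong base orderability can be taken to fix $B_1 \cap B_2$ pointwise, the property transfers between $M$ and $M'$, so it suffices to treat disjoint bases throughout.

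The key observation is that a bijection $\varphi : B'_1 \to B'_2$ corresponds bijectively to a coloring of the ground set $S = B'_1 \cup B'_2$ into $r'$ pairs $\{b,\varphi(b)\}$, and the rainbow subsets of size $r'$ in this coloring are exactly the exchange sets $(B'_1 \setminus X) \cup \varphi(X)$ for $X \subseteq B'_1$. Since every rainbow subset extends to one of size $r'$, the coloring is rainbow circuit-free if and only if $\varphi$ witnesses local strong base orderability. In particular, if $\varphi$ exists, then this rainbow circuit-free coloring with $r'$ classes of size two is standard by Corollary~\ref{cor:standard}, proving the ``only if'' direction.

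For the ``if'' direction, suppose $M'$ admits a standard coloring $S_1,\dots,S_{r'}$ with $|S_i|=2$. Standard colorings are rainbow circuit-free, so by the above correspondence it suffices to show that each $S_i$ meets $B'_1$ and $B'_2$ in exactly one element, so that the induced pairing defines the desired bijection $\varphi$. We prove this transversality by induction on $r'$. The class $S_1$ consists of two parallel elements, and since $B'_1$ and $B'_2$ are independent, each must meet $S_1$ in exactly one element; write $S_1=\{a,b\}$ with $a\in B'_1$ and $b\in B'_2$. The binary minor $M'' = M'/a \setminus b$ has rank $r'-1$ and ground set $S_2\cups S_{r'}$, and $B'_1 - a$, $B'_2 - b$ are disjoint bases of $M''$ covering its ground set. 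A direct rank calculation, using that $S_1\cups S_{i-1}$ is a hyperplane of rank $i-1$ in $M'|(S_1\cups S_i)$ and that $b$ becomes a loop after contracting $a$, shows that $S_2\cups S_{i-1}$ is a hyperplane of $M''|(S_2\cups S_i)$; hence $(S_2,\dots,S_{r'})$ is a standard coloring of $M''$ with classes of size two, and the inductive hypothesis yields transversality of $S_2,\dots,S_{r'}$.

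The ``in particular'' clause follows at once from the equivalence together with Corollary~\ref{cor:standard}: in the disjoint case $M' = M$, and a witnessing bijection would produce a standard coloring of $M$ with classes of size two, one of which would consist of two parallel elements, contradicting simplicity. The main technical hurdle is the transversality step in the converse, where one must carefully verify that the standard structure of the coloring is preserved by the contraction-deletion passage from $M'$ to $M''$, and that no color class can be entirely contained in one of $B'_1$ or $B'_2$ despite the pigeonhole pressure coming from balanced class sizes.
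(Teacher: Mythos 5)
Your proposal is correct and follows essentially the same route as the paper: both directions hinge on identifying a bijection $\varphi$ with a pair-coloring whose rainbow sets are the exchange sets, invoking Corollary~\ref{cor:standard} for the forward direction, and establishing that each color class of a standard coloring meets $B_1-B_2$ and $B_2-B_1$ exactly once for the converse. The only difference is that you prove this transversality by induction via contraction/deletion of the first color class, whereas the paper reads it off directly from the fact that the sets $S_1\cup\dots\cup S_j$ are flats of rank $j$; both arguments are valid.
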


Note that the assumption $S=B_1\cup B_2$ is not restrictive as the restriction of $M$ to $B_1\cup B_2$ is also binary.

An interesting problem of matroidal optimization is to cover the ground set of two matroids by a minimum number of common independent sets. Recently, several works attempted to attack this problem through rainbow circuit-free colorings with small color classes. We focus on the relation between the covering number of a matroid and the maximum size of a color class, where the covering number of the matroid is defined as the minimum number of independent sets needed to cover its ground set. 

\begin{restatable}{thm}{thmnorpbin} 
\label{thm:norpbin}
For every positive integer $g$, there exists a $2$-coverable binary matroid $M$ such that in any rank-preserving rainbow circuit-free coloring of $M$, one of the colors is used at least $g$ times.
\end{restatable}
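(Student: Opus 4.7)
The plan is to translate the theorem into a cogirth question and then produce the required matroid by a short coding-theoretic argument. Any rank-preserving rainbow circuit-free coloring of a rank-$r$ matroid uses exactly $r$ colors, since the induced partition matroid (each class having upper bound $1$) has rank equal to the number of non-empty color classes. Hence Corollary~\ref{cor:standard} applies, and any such coloring of a binary matroid $M$ must be standard. In a standard coloring the color class $S_r$ is a cut of $M|S = M$ itself, so $|S_r| \ge \operatorname{cogirth}(M)$. It therefore suffices to construct, for every positive integer $g$, a $2$-coverable binary matroid whose cogirth is at least $g$.

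I would realize such a matroid as the column matroid $M(A)$ of an $r \times 2r$ binary matrix $A = [\,I_r \mid B\,]$, where $B \in \mathbb{F}_2^{r \times r}$ is to be chosen. Whenever $B$ is invertible the columns of $I_r$ and those of $B$ are two disjoint bases of $M(A)$, which witnesses $2$-coverability. On the other hand, the cocircuits of $M(A)$ correspond precisely to the nonzero codewords of the binary linear code generated by the rows of $A$, so $\operatorname{cogirth}(M(A))$ is the minimum Hamming weight of that code, and the second requirement becomes that every nonzero row combination of $A$ has weight at least $g$.

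To exhibit such a $B$, I would choose it uniformly at random from $\mathbb{F}_2^{r \times r}$: it is invertible with constant positive probability, while for any nonzero $v \in \mathbb{F}_2^r$ with $|v| < g$ the product $vB$ is uniform on $\mathbb{F}_2^r$, so $\Pr(|v| + |vB| < g) \le \sum_{i < g} \binom{r}{i}\, 2^{-r}$. A union bound over the $O(r^{g-1})$ such $v$ (those with $|v| \ge g$ already contribute codewords of weight $\ge g$) yields a failure probability tending to $0$ as $r \to \infty$ with $g$ fixed, so both conditions hold simultaneously for some $B$ once $r$ is large enough in terms of $g$. The hard part will be this joint existence step: arranging invertibility of $B$ together with the minimum-weight condition on the code generated by $[\,I_r \mid B\,]$. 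It is essentially a Gilbert--Varshamov computation, and one could avoid the probabilistic argument entirely by drawing $B$ from an explicit family of binary codes with known minimum distance, e.g., suitably shortened BCH codes.
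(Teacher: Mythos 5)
Your argument is correct, and the reduction step is essentially the paper's: both proofs boil the theorem down to exhibiting a $2$-coverable binary matroid with co-girth at least $g$, and then invoke the fact that a rank-preserving rainbow circuit-free coloring of a binary matroid must have a color class that is a cut (the paper cites Corollary~\ref{cor:rainbow}\eqref{eq:cor1} directly, which is marginally more economical than routing through Corollary~\ref{cor:standard} and the definition of standard colorings, but the content is the same). Where you genuinely diverge is in the construction. The paper takes the co-graphic matroid of a $4$-edge-connected graph of girth at least $g$: Nash-Williams gives two disjoint spanning trees whose complements are two bases covering the edge set, and the co-girth of the co-graphic matroid is the girth of the graph. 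You instead build the column matroid of a random $[\,I_r \mid B\,]$ and run a Gilbert--Varshamov-type first-moment computation; your identifications (invertible $B$ gives two disjoint covering bases; co-girth equals the minimum distance of the code generated by the rows, since a minimum-weight codeword has minimal support) and the union bound over the $O(r^{g-1})$ low-weight vectors $v$ are all sound, and combining a constant lower bound on $\Pr(B\ \text{invertible})$ with an $o(1)$ failure probability for the distance condition is a valid way to get joint existence. The trade-off: your construction is self-contained modulo a standard probabilistic estimate, whereas the paper leans on the (nontrivial, cited) existence of $4$-edge-connected graphs of large girth; on the other hand, the paper's example is a \emph{co-graphic} matroid, which is what lets the authors remark that the phenomenon already occurs in that class, in contrast with the graphic case --- a feature your generic linear-algebraic example does not provide.
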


We also prove a general statement that provides lower and upper bounds on the number of colors used in a rainbow circuit-free coloring with bounded sized color classes.

\begin{restatable}{thm}{thmrank} 
\label{thm:rank}
Let $M$ be a matroid, $g\geq 3$ be a positive integer, and $\cC$ be a family of circuits of $M$ such that for every $0\leq i \leq g-1$ and for every $X\subseteq S,|X|=i$ we have $|\{C\in\cC:\ |X\cap C|\geq 2\}|\leq i-1$. Then in any rainbow circuit-free coloring of $M$ such that each color is used at most $(g-1)$-times, the number of different colors is between $|\cC|/(g-2)$ and $|S|-|\cC|$.    
\end{restatable}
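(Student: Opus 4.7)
My plan is to prove both inequalities simultaneously by a short double-counting argument, in which the color classes themselves play the role of the test sets $X$ in the sparsity hypothesis. Concretely, I would let $A_1,\dots,A_k$ denote the nonempty color classes of the given rainbow circuit-free coloring, with sizes $s_j=|A_j|$, so that $1\le s_j\le g-1$ and $\sum_j s_j=|S|$. For each $j$, set
$$N_j:=\bigl|\{C\in\cC:|A_j\cap C|\ge 2\}\bigr|.$$
Applying the sparsity hypothesis to $X=A_j$ (legal since $|X|=s_j\le g-1$) gives the two-for-one bound $N_j\le s_j-1\le g-2$.

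The next step is to invoke the rainbow circuit-free assumption from the other side: every $C\in\cC$ is non-rainbow, hence contains two elements of a common color, so it is counted by at least one $N_j$. Therefore $\sum_{j=1}^k N_j\ge|\cC|$. Combining this with the two upper bounds on $N_j$ now yields the two halves of the claim. Using the coarser estimate $N_j\le g-2$ gives $(g-2)k\ge|\cC|$, i.e., $k\ge|\cC|/(g-2)$; using the sharper estimate $N_j\le s_j-1$ gives $|\cC|\le\sum_j(s_j-1)=|S|-k$, i.e., $k\le|S|-|\cC|$.

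I do not foresee any real obstacle beyond identifying the correct witness quantity $N_j$; once it is in hand, the whole proof is a clean double count. The only small detail to flag is that the sparsity hypothesis is only ever applied for $i=s_j\ge 1$, so potential edge cases like $i=0$ are avoided by simply discarding empty color classes, which affects neither $k$ nor the bounds.
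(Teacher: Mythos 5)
Your proof is correct and follows essentially the same route as the paper's: the quantities $N_j$ are exactly the paper's classes $\cC_i=\{C\in\cC:|S_i\cap C|\ge 2\}$, the sparsity hypothesis is applied to the color classes to get $|\cC_i|\le |S_i|-1\le g-2$, and the rainbow circuit-free property gives $\cC=\cC_1\cup\dots\cup\cC_q$, yielding both bounds by the same double count.
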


As a consequence, we show that if every color can occurs at most three times then the number of colors used in a rainbow circuit-free coloring might be only a fraction of the rank of the original matroid.

\begin{restatable}{ocor}{corrank}
Given any positive integer $r$, there exists a $2$-coverable connected binary matroid of rank $r$ such that in any rainbow circuit-free coloring of $M$ such that each color is used at most three times, the number of colors is at most $\lceil 6/7\cdot r\rceil$.
\end{restatable}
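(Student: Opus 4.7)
The plan is to apply Theorem~\ref{thm:rank} with $g=4$, which is the regime in which each color is used at most three times. Under this choice the theorem's upper bound on the number of colors reads $|S|-|\cC|$, so it suffices to exhibit, for every positive integer $r$, a $2$-coverable connected binary matroid $M$ of rank $r$ together with a family $\cC$ of circuits satisfying the sparsity hypothesis (for $|X|=2,3$) and with $|S|-|\cC|\leq\lceil 6r/7\rceil$.

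The key reduction is that if $G$ is a simple $K_4$-free graph, then the family of all triangles of $G$ automatically satisfies the sparsity hypothesis of Theorem~\ref{thm:rank} for $g=4$. The pair condition is immediate, since in a simple graph any two edges lie in at most one common triangle; for the triple condition, a would-be violation would consist of three edges $e_1,e_2,e_3$ each pair of which is contained in some triangle while $\{e_1,e_2,e_3\}$ itself is not a triangle. A short case analysis (according to whether the three edges share a single common vertex, or have three pairwise distinct shared vertices) shows that such a configuration forces the four underlying vertices to induce a $K_4$.

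It therefore suffices to construct, for every $r$, a connected $K_4$-free simple graph $G$ of cycle-rank $r$ that is $2$-coverable and that carries at least $|E(G)|-\lceil 6r/7\rceil$ triangles. One way to do this is to fix a small gadget $G_0$ realizing the extremal ratio $6/7$ -- concretely, a connected $K_4$-free simple graph on $8$ vertices with $14$ edges, two edge-disjoint spanning trees, and $8$ triangles -- and to scale it up. Writing $r=7k+s$ with $0\leq s\leq 6$, take $k$ disjoint copies of $G_0$ and identify them at a single common vertex, then attach a path of length $s$ at that common vertex to raise the rank to exactly $r$. Since vertex-identification and path-attachment introduce no cycles crossing the gadgets, the disjoint union of the per-copy triangle families remains a sparse family $\cC$ of size $8k$; connectivity and $2$-coverability follow by combining the two edge-disjoint spanning trees of each copy (appending the path to either of them). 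The resulting matroid $M$ satisfies $|S|-|\cC|=14k+s-8k=6k+s=\lceil 6r/7\rceil$, so invoking Theorem~\ref{thm:rank} completes the proof.

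The main obstacle I foresee is exhibiting the explicit gadget $G_0$. Producing a $K_4$-free simple graph on $8$ vertices with $14$ edges, $8$ triangles, and two edge-disjoint spanning trees requires balancing the triangle count against the edge budget forced by the $(2,2)$-tight constraint; once such a gadget is given, however, the rest of the argument is purely routine.
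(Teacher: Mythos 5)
Your reduction to $K_4$-free graphs is sound as far as it goes: in a simple $K_4$-free graph the family of all triangles does satisfy the pair and triple conditions of Theorem~\ref{thm:rank} for $g=4$. The fatal problem is the gadget you defer to the end: it cannot exist, and more generally no graphic matroid can prove this corollary. As the paper itself recalls at the start of Section 4.3, Conjecture~\ref{conj:red} is known to hold for graphic matroids with a \emph{rank-preserving} reduction and with $2k-1$ in place of $2k$; hence every $2$-coverable graphic matroid of rank $r$ admits a rainbow circuit-free coloring with exactly $r$ colors, each used at most three times. For $r\ge 7$ this exceeds $\lceil 6r/7\rceil$, so no $2$-coverable graphic matroid has the property the corollary asserts. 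Concretely, if your $G_0$ existed, applying Theorem~\ref{thm:rank} with its $8$ triangles to that rank-preserving coloring would force $7=r\le |S|-|\cC|=14-8=6$, a contradiction; so a simple $K_4$-free graph on $8$ vertices decomposing into two spanning trees has at most $7$ triangles, and more generally any admissible family $\cC$ of circuits of a $2$-coverable graphic matroid satisfies $|\cC|\le |S|-r$, so the upper bound $|S|-|\cC|$ can never drop below $r$. The obstacle you flag at the end is therefore not a technicality but an impossibility.

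The paper escapes this by working with \emph{co-graphic} matroids, precisely the setting it singles out as contrasting with the graphic case: it chains $q$ copies of $K_{3,4}$ together by pairs of edges, attaches $j$ pendant degree-two vertices, and takes $\cC$ to consist of the vertex stars $\delta(v)$ together with the two-edge connecting cuts. These are circuits of the co-graphic matroid; the girth-$4$ and distance properties of $K_{3,4}$ give the sparsity hypothesis, and $|S|-|\cC|=6q+j-1=\lceil 6r/7\rceil$. Two smaller defects in your write-up, secondary to the main one: gluing gadgets at a single vertex and attaching a path creates cut vertices and bridges, so the resulting graphic matroid is a direct sum and not connected, whereas the statement requires a connected matroid; and the cases $1\le r\le 6$, where your decomposition gives $k=0$ copies, need the separate (trivial) observation that $\lceil 6r/7\rceil=r$ there.
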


Finally, we consider rainbow circuit-free colorings that do not increase the covering number of the matroid. For simple graphic matroids, we show that there exists a rainbow circuit-free coloring  that uses each color at most twice only if the graph is independent in the $2$-dimensional rigidity matroid.

\begin{restatable}{thm}{thmsparse}\label{thm:sparse}
If a simple graphic matroid has a rainbow circuit-free coloring such that each color is used at most twice, then the corresponding graph is $(2,3)$-sparse.
\end{restatable}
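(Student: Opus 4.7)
The plan is to reduce $(2,3)$-sparsity to an induced-subgraph bound and then apply Corollary~\ref{cor:standard} to each induced subgraph. Since $(2,3)$-sparsity is equivalent to the bound $|E(G[V'])| \leq 2|V'|-3$ for every $V' \subseteq V(G)$ with $|V'| \geq 2$, I will fix such a $V'$ of size $k$, set $H := G[V']$, and write $c(H)$ for the number of connected components of $H$ and $r := k - c(H)$ for the rank of the simple graphic matroid $M(H)$. If $E(H) = \emptyset$ the bound is trivial, so I may assume $E(H) \neq \emptyset$.

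Restricting the given coloring to $E(H)$ still yields a rainbow circuit-free coloring of $M(H)$ in which every color is used at most twice, because every circuit of $M(H)$ is a circuit of $M(G)$. Let $t \geq 1$ denote the number of distinct colors appearing on $E(H)$. The partition matroid on $E(H)$ with singleton upper bounds on the color classes has the rainbow subsets of $E(H)$ as its independent sets; by the rainbow circuit-free property these are independent in $M(H)$ as well, so this partition matroid is a reduction of $M(H)$ and hence $t \leq r$.

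Then I will split into two cases. If $t \leq r - 1$, then $|E(H)| \leq 2t \leq 2r-2 = 2k - 2c(H) - 2 \leq 2k - 4$, using $c(H) \geq 1$. If $t = r$, Corollary~\ref{cor:standard} applied to $M(H)$ supplies a color class consisting of parallel elements; since $G$ (and therefore $H$) is simple, this class contains only a single edge, so at least one color is used exactly once and $|E(H)| \leq 2t - 1 = 2k - 2c(H) - 1 \leq 2k - 3$. Combining the two cases gives $|E(H)| \leq 2k - 3$, which is the induced-subgraph bound to be proved.

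The main conceptual step is recognizing that restricting to an induced subgraph is the right move and that in the rank-preserving case $t = r$ Corollary~\ref{cor:standard} supplies the crucial ``$-1$'' precisely because a simple graphic matroid admits no pair of parallel elements; in the non-rank-preserving case $t < r$ the bound is automatic from $|E(H)| \leq 2t$. I do not foresee a substantive technical obstacle beyond checking this case analysis.
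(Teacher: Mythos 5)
Your proof is correct and takes essentially the same approach as the paper's: bound the number of colors appearing on each induced subgraph by the rank of the restricted matroid, and invoke Corollary~\ref{cor:standard} in the rank-preserving case to produce a color class of parallel elements, which simplicity forces to be a single edge. The only difference from the paper is cosmetic --- you split into the cases $t\leq r-1$ and $t=r$, while the paper argues by contradiction at the equality $|E[X]|=2|X|-2$.
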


Furthermore, we give a complete characterization of minimally rigid graphs that admit such a coloring in terms of Henneberg operations.

\begin{restatable}{thm}{thmho}
Let $G=(V,E)$ be a simple graph such that $|E|=2\cdot |V|-3$. Then the graphic matroid of $G$ has a rainbow circuit-free coloring using each color at most twice if and only if $G$ can be constructed from $K_2$ by a sequence of (H0) operations.
\end{restatable}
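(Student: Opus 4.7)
The plan is to prove both directions by induction on $|V|$. The base case is $|V|=2$, where $G=K_2$; this is the starting graph of the (H0) construction process and it admits the trivial one-color coloring.

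For the sufficiency direction, suppose $G$ is obtained from a smaller $(2,3)$-tight graph $G'$ by an (H0) operation, i.e., by attaching a new vertex $v$ of degree two with two new incident edges $e_1,e_2$. I would extend any valid coloring of $G'$ supplied by the inductive hypothesis by giving both $e_1$ and $e_2$ a common fresh color $\gamma$. Any circuit of $G$ that passes through $v$ must use both $e_1$ and $e_2$ and therefore contains two $\gamma$-colored edges; any circuit avoiding $v$ is a circuit of $G'$ and is non-rainbow by induction. The color $\gamma$ is used exactly twice, and the other color classes are unchanged.

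For the necessity direction, first note that any rainbow set in a rainbow circuit-free coloring is independent in the graphic matroid, since a circuit contained in it would itself be rainbow. Selecting one element per color class shows that the number of colors is at most the rank $|V|-1$; combined with $|E|=2|V|-3$ and the at-most-twice hypothesis, the coloring must use exactly $|V|-1$ colors. Since simple graphic matroids are binary, Corollary~\ref{cor:standard} applies: the coloring is standard, so its top color class $S_{|V|-1}$ is a bond of $G$ with at most two edges. A bond of size one would be a bridge, which is impossible in a graph with $|E|=2|V|-3$ (the two resulting components would have total edge count at most $2|V|-6$), so $S_{|V|-1}$ has exactly two edges.

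The crux is to show this 2-edge bond isolates a vertex of degree two. Writing the bond as a bipartition $(A,B)$ of $V$ with two crossing edges, if $|A|\geq 2$ and $|B|\geq 2$, then the sparsity bounds $|E(G[A])|\leq 2|A|-3$ and $|E(G[B])|\leq 2|B|-3$ together with the two crossing edges give $|E(G)|\leq 2|V|-4$, contradicting $|E(G)|=2|V|-3$. Hence one side is a singleton $\{v\}$ with $\deg(v)=2$. Removing $v$ yields a smaller $(2,3)$-tight graph on which the restricted coloring is still rainbow circuit-free with each color used at most twice; the inductive hypothesis then provides an (H0) construction of $G-v$, to which $v$ is reattached by a final (H0) step. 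The main obstacle is precisely this sparsity argument that pins the 2-edge bond on a degree-two vertex; after that, the induction closes cleanly.
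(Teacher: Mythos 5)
Your proof is correct, but the hard direction runs in the opposite direction from the paper's. Both arguments first establish that the coloring must use exactly $r=|V|-1$ colors and hence is standard, but from there the paper works \emph{bottom-up}: it takes the ordering $S_1,\dots,S_r$ from Lemma~\ref{lem:equiv} in which $S_1$ is a parallel class of $M$ (so a single edge $v_0v_1$, by simplicity) and each $S_{i+1}$ is a pair of parallel edges in the graph obtained by contracting $S_1\cup\dots\cup S_i$; simplicity forces those two edges to share a fresh common endpoint $v_{i+1}$ with both other endpoints among $v_0,\dots,v_i$, so the Henneberg order is read off directly from the standard ordering with no induction and no sparsity counting. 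You instead work \emph{top-down}: you use the cut end of the chain (via Corollary~\ref{cor:rainbow}/\ref{cor:standard}) to find a color class that is a bond of size two, then invoke Theorem~\ref{thm:sparse} and a $(2,3)$-sparsity count to force that bond to be $\delta(v)$ for a degree-two vertex $v$, delete $v$, and induct. Your route is sound and self-contained modulo the cited results, but it needs Theorem~\ref{thm:sparse} as an extra input, whereas the paper's parallel-class argument gets the construction order for free. One small imprecision worth fixing: when you exclude a one-edge bond (a bridge) you bound both sides by $2|X|-3$, but that bound only applies to sides with at least two vertices; if one side is a singleton it contributes $0$ edges rather than $-1$, giving $|E|\le 2|V|-4$ anyway, so the conclusion stands after adjusting the count.
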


\section{Preliminaries} \label{sec:preliminaries}

\paragraph{\it General notation}

Let $G=(V,E)$ be an undirected graph, $X\subseteq V$ be a subset of vertices, and $F\subseteq E$ be a subset of edges. We denote the \textbf{set of edges spanned by $X$} by $E[X]$, while the \textbf{set of edges incident to a vertex} $v\in V$ is denoted by $\delta(v)$. 
The graphs obtained by deleting $X$ or $F$ are denoted by $G-X$ and $G-F$, respectively. An inclusionwise minimal subset of edges whose deletion increases the number of connected components of $G$ is called an \textbf{elementary cut}.

Given a ground set $S$ together with subsets $X,Y\subseteq S$, 
the \textbf{difference} of $X$ and $Y$ is denoted by $X-Y$. If $Y$ consists of a single element $y$, then $X-\{y\}$ and $X\cup\{y\}$ are abbreviated as $X-y$ and $X+y$, respectively.

\paragraph{\it Matroids}

Matroids were introduced as abstract generalizations of linear independence by Whitney \cite{whitney1992abstract} and independently by Nakasawa \cite{nishimura2009lost}. A \textbf{matroid} $M=(S,\cI)$ is defined by its \textbf{ground set} $S$ and its \textbf{family of independent sets} $\cI\subseteq 2^S$ that satisfies the \textbf{independence axioms}: (I1) $\emptyset\in\cI$, (I2) $X\subseteq Y,\ Y\in\cI\Rightarrow X\in\cI$, and (I3) $X,Y\in\cI,\ |X|<|Y|\Rightarrow\exists e\in Y-X\ s.t.\ X+e\in\cI$. The maximum size of an independent subset of a set $X$ is the \textbf{rank} of $X$ and is denoted by $r_M(X)$. The maximal independent subsets of $S$ are called \textbf{bases}. A \textbf{circuit} is an inclusionwise minimal non-independent set, while a \textbf{loop} is a circuit consisting of a single element. The matroid is \textbf{connected} if for any two elements $e,f\in S$ there exists a circuit containing both. A \textbf{cut} is an inclusionwise minimal set $X\subseteq S$ that intersects every basis. Although in general a cut and a circuit can intersect each other in an odd number of elements, the size of their intersection cannot be one. Two non-loop elements $e,f\in S$ are \textbf{parallel} if $r_M(\{e,f\})=1$. A set $X\subseteq S$ is \textbf{closed} or is a \textbf{flat} if $r_M(X+e)>r_M(X)$ for every $e\in S-X$. A flat of rank $r_M(S)-1$ is called a \textbf{hyperplane}. 

The \textbf{restriction} of a matroid $M=(S,\cI)$ to a subset $S'\subseteq S$ is again a matroid $M|S'=(S',\cI')$ with independence family $\cI'=\{I\in\cI:\ I\subseteq S'\}$. The \textbf{contraction} of a flat $S''\subseteq S$ results in a matroid $M/S''=(S-S'',\cI'')$ where $\cI''=\{I\in\cI:\ I\subseteq S-S'',|I|=r_M(S''\cup I)-r_M(S'')\}$. A matroid $N$ that can be obtained from $M$ by a sequence of restrictions and contractions is called a \textbf{minor} of $M$; in this case $M$ \textbf{contains $N$ as a minor}. The \textbf{dual} of $M$ is the matroid $M^*=(S,\cI^*)$ where $\cI^*=\{X\subseteq S:\ S-X\ \text{contains a basis of $M$}\}$. The \textbf{direct sum} $M_1\oplus M_2$ of matroids $M_1=(S_1,\cI_1)$ and $M_2=(S_2,\cI_2)$ on disjoint ground sets is a matroid $M=(S_1\cup S_2,\mathcal{I})$ whose independent sets are the disjoint unions of an independent set of $M_1$ and an independent set of $M_2$. 

The minimum size of a circuit is called the \textbf{girth}, while the minimum size of a cut is called the \textbf{co-girth} of the matroid. It is not difficult to check that a circuit of a matroid $M$ is a cut in the dual matroid $M^*$ and vice versa, thus the girth and co-girth of $M$ are the same as the co-girth and girth of $M^*$, respectively. 

For non-negative integers $r\leq n$, the \textbf{uniform matroid} $U_{r,n}$ is defined on an $n$-element set by setting every set of size at most $r$ to be independent. A matroid is \textbf{binary} if there exists a family of vectors from a vector space over the finite field $GF(2)$ whose linear independence relation is the same as that of the matroid. Binary matroids form a minor-closed class that is also closed for taking dual. Tutte \cite{tutte1958homotopyi,tutte1958homotopyii} gave the following forbidden minor characterization of binary matroids. 

\begin{thm}[Tutte] \label{thm:tutte}
A matroid is binary if and only if it has no $U_{2,4}$-minor.
\end{thm}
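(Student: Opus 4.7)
The forward direction — that a binary matroid has no $U_{2,4}$ minor — is immediate once one observes that the class of binary matroids is closed under minors and that $U_{2,4}$ itself is not binary: a $GF(2)$ vector space of dimension two contains only three nonzero vectors, so four pairwise non-parallel rank-one elements cannot be realized. The converse is the substantive direction, and the plan is to route it through the classical ``symmetric difference'' characterization: a matroid $M$ is binary if and only if for every pair of distinct circuits $C_1, C_2$ of $M$, the symmetric difference $C_1 \triangle C_2$ is a (possibly empty) disjoint union of circuits of $M$.

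The first step is to verify the two directions of this intermediate equivalence. If $M$ is binary with a $GF(2)$-representation, then each circuit corresponds to a minimal set of vectors summing to zero, and the sum of the indicator vectors of $C_1$ and $C_2$ — which is the indicator of $C_1 \triangle C_2$ — is again a zero combination, so its support is a union of circuits. For the converse, I would fix a basis $B$ of $M$ and construct a $GF(2)$-matrix whose columns are indexed by $S$: the columns in $B$ form the standard basis of $GF(2)^r$, and for each $e \in S - B$ the column is the incidence vector of the fundamental circuit $C(e,B) - e$. The symmetric difference property is precisely what is needed to check that the circuits of this linear matroid coincide with the circuits of $M$, proving that $M$ is binary.

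The crux is then to show that a $U_{2,4}$-free matroid $M$ satisfies the symmetric difference property. I would argue by contradiction: let $C_1, C_2$ be distinct circuits such that $C_1 \triangle C_2$ contains no circuit, chosen to minimise $|C_1 \cup C_2|$. Since $C_1 \triangle C_2$ itself must fail to contain any circuit, we must have $C_1 \cap C_2 \neq \emptyset$; picking $e \in C_1 \cap C_2$ and applying circuit elimination yields a further circuit $C_3 \subseteq (C_1 \cup C_2) - e$, which by the same reasoning must meet $C_1 \cap C_2$ in another element $f$. Exploiting the minimality of $|C_1 \cup C_2|$ and iterating this circuit-elimination argument, one isolates a four-element subset $T \subseteq C_1 \cup C_2$ — typically two elements of $C_1 \cap C_2$ together with one element from each of $C_1 - C_2$ and $C_2 - C_1$ — such that, after deleting $S - (C_1 \cup C_2)$ and contracting $(C_1 \cup C_2) - T$, what remains is a rank-$2$ matroid on $T$ in which every pair is independent and every triple dependent, i.e.\ a $U_{2,4}$-minor, contradicting the hypothesis.

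The main obstacle is this last step: identifying the correct four-element set $T$ and confirming that the minor operations truly produce $U_{2,4}$, rather than collapsing parallel elements or leaving the rank higher than two, demands a careful case analysis on $|C_1 \cap C_2|$ and on how $C_3$ distributes between $C_1 - C_2$ and $C_2 - C_1$. The minimality hypothesis on $|C_1 \cup C_2|$ has to be invoked repeatedly to rule out the degenerate configurations in which the would-be $U_{2,4}$ acquires an extra small circuit after contraction; this case analysis is the delicate combinatorial heart of Tutte's original argument, and any cleaner modern treatment still has to grind through essentially the same configurations.
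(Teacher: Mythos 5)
The paper does not prove this statement at all: Theorem~\ref{thm:tutte} is Tutte's excluded-minor characterization, which the authors simply cite from Tutte's homotopy papers and use as a black box. So the only question is whether your proposal stands on its own as a proof, and it does not: it is an outline of the standard modern route (via the symmetric-difference characterization of binary matroids) in which the one genuinely hard step is described but never executed. Concretely, in your third paragraph you assert that from a minimal pair of circuits $C_1,C_2$ violating the circuit-space condition one can ``isolate'' a four-element set $T$ and check that deleting $S-(C_1\cup C_2)$ and contracting $(C_1\cup C_2)-T$ yields $U_{2,4}$; your fourth paragraph then concedes that confirming this --- ruling out parallel elements after contraction, controlling the rank, and handling all distributions of the eliminated circuit $C_3$ across $C_1-C_2$, $C_2-C_1$ and $C_1\cap C_2$ --- is ``the delicate combinatorial heart'' that you have not carried out. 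That step \emph{is} the theorem; without it the argument is a plan, not a proof.

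Two smaller issues would also need attention in a complete write-up. First, there is a logical mismatch in your reduction: you state the intermediate characterization as ``$C_1\triangle C_2$ is a disjoint union of circuits'' but negate it as ``$C_1\triangle C_2$ contains no circuit.'' These are both equivalent to binarity, but that equivalence is itself a lemma (a set that is not a disjoint union of circuits may well contain a circuit), and your representation-building step in the second paragraph needs the stronger disjoint-union form while your contradiction argument uses the weaker contains-a-circuit form. Second, the claim that ``the symmetric difference property is precisely what is needed'' to verify that the fundamental-circuit incidence matrix represents $M$ is true but also nontrivial: one must show that every circuit of $M$ is dependent in the vector matroid and conversely, which again requires an actual argument rather than an appeal to the property. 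None of these are wrong turns --- the route is a standard one --- but as written the proposal leaves the substantive content of Tutte's theorem unproved.
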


A \textbf{partition matroid} is a matroid $N=(S,\cJ)$ such that $\cJ=\{X\subseteq S:\ |X\cap S_i|\leq 1\ \text{for $i=1,\dots,q$}\}$ for some partition $S=S_1\cup\dots\cup S_q$.\footnote{In general, the upper bounds might be different for the different partition classes. As all the partition matroids used in the paper have all-ones upper bounds, we make this restriction without explicitly mentioning it.} For a graph $G=(V,E)$, the \textbf{graphic matroid} $M=(E,\cI)$ of $G$ is defined as $\cI=\{F\subseteq E:\ F\ \text{does not contain a cycle}\}$, that is, a subset $F\subseteq E$ is independent if it is a forest. The dual of the graphic matroid is called the \textbf{co-graphic} matroid of $G$. Note that a subset $F\subseteq E$ is a circuit of the co-graphic matroid if and only if $F$ is an elementary cut of $G$.

Given two matroids $M=(S,\cI)$ and $N=(S,\cJ)$ over the same ground set, we say that $N$ is a \textbf{reduction} of $M$ if every independent set of $N$ is also independent in $M$, that is, $\cJ\subseteq \cI$. We denote such a relation by $N\preceq M$. If, furthermore, the rank of $N$ coincides with that of $M$, then $N$ is a \textbf{rank-preserving reduction} of $M$ which is denoted by $N\preceq_r M$.

Lucas \cite{lucas1974properties} studied rank-preserving reductions of binary matroids, and proved the following result that we will rely on.

\begin{thm}[Lucas] \label{thm:Lucas} 
Let $M$ and $N$ be binary matroids such that $N\preceq_r M$ and $N \ne M$. Then there is a non-empty flat $F$ of $M$ such that \[N \preceq_r M|F \oplus M/F \preceq_r M. \]
\end{thm}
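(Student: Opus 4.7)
The plan is to produce the flat $F$ by locating a single circuit that witnesses the gap between $N$ and $M$, then verifying the sandwich in two halves. Since $\cJ\subsetneq\cI$, there is a set independent in $M$ but dependent in $N$; any inclusion-minimal such set is an $N$-circuit $C$ whose elements are $M$-independent. I would propose $F:=\text{cl}_M(C)$, a non-empty flat of $M$ in which $C$ is a basis of $M|F$.

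The upper inequality $M|F\oplus M/F\preceq_r M$ is routine for any flat: the ranks agree via $r_M(F)+(r_M(S)-r_M(F))=r_M(S)$, and any direct-sum-independent set $I_1\cup I_2$ with $I_1\subseteq F$ and $I_2\subseteq S\setminus F$ is $M$-independent because extending $I_1$ to an $M$-basis $B_F$ of $F$ gives $B_F\cup I_2$ $M$-independent by $r_M(I_2\cup F)=r_M(F)+|I_2|$. For the lower inequality $N\preceq_r M|F\oplus M/F$, I take an $N$-independent set $I$ and split $I=(I\cap F)\cup(I\setminus F)$; the first summand is $M|F$-independent since $I$ is $M$-independent, and the crux is to show $I\setminus F$ is $M/F$-independent.

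Supposing the contrary, there is an $M$-circuit $D\subseteq(I\setminus F)\cup F$ meeting $I\setminus F$. I would then combine an $N$-circuit $C_0\subseteq D$ with $N$-circuits supported in $F$, using the binary cycle-space structure of $N$ (valid by Tutte's characterization, Theorem~\ref{thm:tutte}): for every $e\in F\setminus C$, the $M$-fundamental circuit of $e$ with respect to $C$ lies in $F$, is $M$-dependent, and therefore contains an $N$-circuit through $e$, while $C$ itself is an $N$-circuit covering the elements of $C$. Arranging the symmetric-difference combination so that all $F$-elements cancel would leave a non-empty $N$-dependent subset of $I$, contradicting the $N$-independence of $I$.

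The main obstacle is making the final cancellation precise: one must ensure that the aggregated symmetric difference empties the $F$-side without also emptying the $(I\setminus F)$-side. This is exactly where binariness of $N$ is indispensable, since symmetric differences of circuits decompose into circuits only in binary matroids (this fails already for $U_{2,4}$). If the direct cancellation argument proves too delicate to orchestrate cleanly, the fallback plan is induction on $|S|-r_M(S)$ or on a suitable measure of the gap between $N$ and $M$, iterating the construction on the strictly smaller reduction $N\preceq_r M|F\oplus M/F$ whenever it is proper.
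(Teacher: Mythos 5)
First, a point of reference: the paper does not prove this statement at all; it is imported verbatim from Lucas's work on weak maps and used as a black box, so there is no in-paper argument to measure yours against. Judged on its own terms, your proposal has a genuine gap at its very first step: taking $F=\mathrm{cl}_M(C)$ for an \emph{arbitrary} inclusion-minimal $M$-independent, $N$-dependent set $C$ does not in general give $N\preceq M|F\oplus M/F$. Concretely, let $M$ be the graphic matroid of $K_4$ minus an edge, with vertices $1,2,3,4$ and edges $a=12$, $b=23$, $c=13$, $d=34$, $e=14$, and let $N$ be the partition matroid with classes $\{a,b\}$, $\{c,d\}$, $\{e\}$. All four $N$-bases ($\{a,c,e\},\{a,d,e\},\{b,c,e\},\{b,d,e\}$) are spanning trees, so $N\preceq_r M$, both matroids are binary, and $N\neq M$. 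Your recipe permits $C=\{a,b\}$ (it is $M$-independent, $N$-dependent, and inclusion-minimal such), giving $F=\mathrm{cl}_M(\{a,b\})=\{a,b,c\}$. But $\{d,e\}$ is $N$-independent, while $d$ and $e$ become parallel in $M/F$, so $N\not\preceq M|F\oplus M/F$. The conclusion of the theorem does hold for this instance, but only via the \emph{other} witness, $F'=\mathrm{cl}_M(\{c,d\})=\{c,d,e\}$. So the choice of circuit is essential, your proposal contains no mechanism for making it correctly, and that selection is precisely where the real content of Lucas's theorem lies.

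The same example exposes the local flaw in your cancellation step: you assert that for $e\in F\setminus C$ the fundamental $M$-circuit of $e$ with respect to $C$ contains an $N$-circuit \emph{through} $e$. It is certainly $N$-dependent, but the only $N$-circuit it contains may be $C$ itself, which avoids $e$; above, $C+c=\{a,b,c\}$ contains no $N$-circuit through $c$, which is exactly why the $F$-side cannot be cancelled --- and why the claim genuinely fails for that $F$ rather than merely being hard to verify. The fallback induction on $|S|-r_M(S)$ does not repair this, since it still presupposes that a first step of the claimed form exists. Finally, note that as the theorem is used in the paper's proof of Theorem~\ref{thm:rainbow}, the flat $F$ must in addition be \emph{proper} (with $F=S$ the sandwich collapses to $N\preceq_r M\preceq_r M$ and the induction there cannot advance), so a correct proof must also deliver $F\neq S$, or at least $M|F\oplus M/F\neq M$; your construction does not address this either.
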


\paragraph{\it Colorings and reducibility}

There is a one-to-one correspondence between rainbow circuit-free colorings of a matroid and its reductions to partition matroids. Indeed, if we pick one element from each color class of any rainbow circuit-free coloring, then the resulting set is independent -- in other words, the partition matroid defined by the color classes provides a reduction of the matroid. Vice versa, taking the partition classes of any reduction to a partition matroid as color classes gives a rainbow circuit-free coloring. By the above, the terms `rainbow circuit-free colorings' and `reductions to partition matroids' are interchangeable and are used alternatingly throughout the paper depending on the context. In particular, a rank-preserving rainbow circuit-free coloring corresponds to a rank-preserving reduction to a partition matroid.

\section{Rainbow circuit-free colorings}

\subsection{Standard colorings} \label{sec:standard}

Recall that a coloring was defined to be standard if, after possibly reindexing the color classes, $S_i$ is a cut of the matroid $M|(S_1 \cups S_i)$ for $i=1,\dots,r$. We show that every standard coloring is rainbow circuit-free, together with some equivalent formulations of standard colorings which will be used later.

\begin{lem} \label{lem:equiv} Let $M$ be a matroid of rank $r$ and $S =S_1 \cups S_r$ be  partition of its ground set into $r$ color classes. The followings are equivalent:
\begin{enumerate}[(i)]
    \item $S_i$ is a cut of $M|(S_1 \cups S_i)$ for $i = 1, \dots, r$, \label{eq:i}
    \item the coloring is rainbow circuit-free and $r_M(S_1 \cups S_i) = i$ for $i=1, \dots r$, \label{eq:ii}
    \item each $S_i$ is non-empty and $S_1 \cups S_i$ is closed in $M$ for $i=1,\dots, r$. \label{eq:iii}
\end{enumerate}
\end{lem}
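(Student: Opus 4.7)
The plan is to prove the three statements equivalent via the cyclic chain (i)$\Rightarrow$(ii)$\Rightarrow$(iii)$\Rightarrow$(i), exploiting in each step the duality between cuts in a restriction and hyperplanes of that restriction, together with a careful inductive construction of a rainbow basis.

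For (i)$\Rightarrow$(ii), I would first argue that if $S_i$ is a cut of $M|(S_1\cups S_i)$, then its complement $S_1\cups S_{i-1}$ is a hyperplane of that restriction, so $r_M(S_1\cups S_{i-1})=r_M(S_1\cups S_i)-1$. Since we have $r$ colors, none of which can be empty (an empty $S_i$ would force the cut $S_i$ to be empty, absurd), induction gives $r_M(S_1\cups S_i)=i$. To rule out a rainbow circuit $C$, take the largest index $j$ with $C\cap S_j\neq\emptyset$, let $\{e\}=C\cap S_j$, and note $e\in\mathrm{cl}_M(C-e)\subseteq \mathrm{cl}_M(S_1\cups S_{j-1})$; intersecting with $S_1\cups S_j$ (which preserves closure when the argument sits in the restriction) contradicts the fact that $S_1\cups S_{j-1}$ is a hyperplane of $M|(S_1\cups S_j)$, i.e.\ does not contain $e$ in its closure.

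For (ii)$\Rightarrow$(iii), non-emptiness of each $S_i$ follows directly from the rank condition, since $r_M(S_1\cups S_{i-1})=i-1<i=r_M(S_1\cups S_i)$. The closedness will be established via a rainbow basis construction: by induction on $i$, build a rainbow basis $B_i$ of $S_1\cups S_i$ by extending $B_{i-1}$ with an element of $S_i$ independent from $B_{i-1}$. Such an element exists because otherwise $S_i\subseteq\mathrm{cl}_M(B_{i-1})$, contradicting the rank jump. Then, if some $e\in S_j$ with $j>i$ were in $\mathrm{cl}_M(S_1\cups S_i)$, extend $B_i$ further to a rainbow basis $B_{j-1}$ of $S_1\cups S_{j-1}$; the fundamental circuit of $e$ with respect to $B_{j-1}$ lives in $B_{j-1}+e$, uses distinct colors by construction, and is therefore a rainbow circuit.

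For (iii)$\Rightarrow$(i), I would first show $r_M(S_1\cups S_i)=i$: each inclusion $S_1\cups S_{i-1}\subsetneq S_1\cups S_i$ raises the rank by at least one because $S_1\cups S_{i-1}$ is closed and $S_i$ is non-empty, so $r_M(S)\geq r$, and since the rank of $M$ is exactly $r$ all these increments are equal to one. Finally, since $S_1\cups S_{i-1}$ is closed in $M$ it is also closed in $M|(S_1\cups S_i)$, and its rank is one less than that of $S_1\cups S_i$, so it is a hyperplane there; hence its complement $S_i$ is a cut of $M|(S_1\cups S_i)$.

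The main obstacle is the bookkeeping in (ii)$\Rightarrow$(iii): one must simultaneously maintain a rainbow basis and ensure that any dependence of an outside element $e$ gives rise to a genuinely rainbow circuit. Once the inductive rainbow-basis construction is in place, the other two implications are essentially routine translations between the cut/hyperplane and closure/rank viewpoints.
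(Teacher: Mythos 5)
Your proposal is correct and follows essentially the same route as the paper: the same cyclic chain (i)$\Rightarrow$(ii)$\Rightarrow$(iii)$\Rightarrow$(i), using the cut/hyperplane complementarity and the fact that rainbow sets are independent. The only cosmetic differences are that the paper invokes directly that a circuit and a cut cannot meet in exactly one element where you unpack this via closures, and in (ii)$\Rightarrow$(iii) the paper simply counts the $i+1$ colors appearing in $S_1 \cups S_i \cup \{s\}$ where you build an explicit rainbow basis and fundamental circuit.
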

\begin{proof}
First we show that \eqref{eq:i} implies \eqref{eq:ii}. As the deletion of a cut decreases the rank of a matroid by exactly one, $r_M(S_1 \cups S_i) = i$ indeed holds for $i=1 \dots, r$. For any circuit $C$ of $M$, let $i$ be the largest index for which $C\cap S_i$ is non-empty. Then $C \subseteq S_1 \cups S_i$ and $S_i$ is a cut of $M|(S_1 \cups S_i)$, hence $|C \cap S_i| \ge 2$. Therefore no circuit $C$ is rainbow colored.

Assume next that \eqref{eq:ii} holds. For every $i=1, \dots, r$ and $s \in S-\bigcup_{j=1}^i S_j$, exactly $i+1$ colors appear in $S_1 \cups S_i \cup \{s\}$, hence $r_M(S_1 \cups S_i\cup \{s\}) \ge i+1$ as each rainbow set is independent. Then $r_M(S_1 \cups S_i \cup \{s\}) > r_M(S_1 \cups S_i)$, hence $S_1 \cups S_i$ is closed, proving that \eqref{eq:ii} implies \eqref{eq:iii}.

Finally, suppose that \eqref{eq:iii} holds. As $S_1 \subset (S_1 \cup S_2) \subset \dots \subset (S_1 \cups S_r)$ is a strictly increasing chain of flats, we have $1 \le r_M(S_1) < r_M(S_1 \cup S_2) < \dots < r_M(S_1 \cups S_r) = r$, hence $r_M(S_1 \cups S_i) = i$ for $i=1,\dots, r$. As $S_1 \cups S_{i-1}$ is also a closed set of $M|(S_1 \cups S_i)$, $S_i$ is the complement of a hyperplane of $M|(S_1 \cups S_i)$, that is, $S_i$ is a cut of this matroid, proving that \eqref{eq:iii} implies \eqref{eq:i}.
\end{proof}

Note that \eqref{eq:iii} provides the following algorithm for constructing standard colorings: For $i=1, \dots, r$, pick an element $s_i \in S-(S_1 \cups S_{i-1})$, and let $S_i$ be the closure of $S_1 \cups S_{i-1} \cup \{s_i\}$ in $M$. As flats of rank 1 are exactly the classes of parallel elements, the following is yet another way to formulate this algorithm: Let $S_i$ be a class of parallel elements in $M/(S_1 \cups S_{i-1})$ for $i=1,\dots, r$.

\subsection{Rainbow and monochromatic circuits and cuts} \label{sec:rainbow}

Based on Lucas' theorem, we turn to the proof of our first result.

\thmrainbow*
\begin{proof}
As the class of binary matroids is closed under taking duals and the cuts of a matroid coincide with the circuits of the dual matroid, it suffices to prove the first statement. We proceed by induction on the size of the ground set $S$ of $M$. Let $N$ be the partition matroid defined by the coloring of $M$. Recall that $N \preceq_r M$ holds. If $N=M$, then each color class is a monochromatic cut of $M$. Otherwise, by Theorem~\ref{thm:Lucas} there exists a non-empty flat $F$ of $M$ such that $N \preceq_r M|F \oplus M/F \preceq_r M$. From $N \preceq M|F \oplus M/F$ it follows that $N|F \preceq M|F$ and $N|(S-F) \preceq M/F$. This implies that 
\begin{align*}
r 
{}&{}= 
r_N(S) \\
{}&{}\le
r_N(F)+r_N(S-F)\\
{}&{}\le
r_M(F)+r_{M/F}(S-F)\\
{}&{}= 
r_M(S) = r,
\end{align*}
hence $r_N(F) = r_M(F)$ and $r_N(S-F) = r_{M/F}(S-F)$. Therefore we get $N|F \preceq_r M|F$ and $N|(S-F) \preceq_r M/F$. Applying the induction hypothesis to the reduction $N|(S-F) \preceq_r M/F$, we get a monochromatic cut of $M/F$.  As each cut of $M/F$ is also a cut of $M$, the original matroid $M$ has a monochromatic cut as well.
\end{proof}

\corrainbow*
\begin{proof}
By duality it suffices to prove the first statement. If $M$ has no rainbow circuit, then by Theorem~\ref{thm:rainbow} it has a cut $C$ and a color class $C'$ such that $C \subseteq C'$. If $C \subsetneq C'$, then each color class intersects $S-C$. As rainbow colored sets of size $r$ are bases of $M$, this implies that there is a basis of $M$ which is disjoint from the cut $C$, a contradiction. Hence $C=C'$, that is, the color class $C'$ is a cut of $M$.
\end{proof}

\corstandard* 
\begin{proof}
As the restriction of a binary matroid is binary again, Corollary~\ref{cor:rainbow} implies that the color classes $S_1, \dots, S_r$ can be indexed in such a way that $S_i$ is a cut of $M|(S-\bigcup_{j>i} S_j)$. In particular, the color class $S_1$ consists of parallel elements of $M$.
\end{proof}

\subsection{Characterization of binary matroids} \label{sec:char}

The aim of this section is to prove Theorem~\ref{thm:standard}. The proof is based on the excluded-minor characterization of binary matroids, so we first prove two lemmas about standard colorings of minors of matroids.

\begin{lem} \label{lem:restriction}
Consider a standard coloring of a matroid $M$ and let $Z \subseteq S$ be a subset such that exactly $r_M(Z)$ colors appear in $Z$. Then the coloring restricted to $Z$ is a standard coloring of $M|Z$.
\end{lem}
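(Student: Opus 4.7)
My plan is to invoke the equivalence from Lemma~\ref{lem:equiv}, specifically characterization~\eqref{eq:iii}, which says that a coloring is standard precisely when every color class is non-empty and the prefix unions of color classes form a chain of flats (with the number of classes matching the rank). The task is then purely to verify this property for the inherited coloring on $Z$.

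First I would fix an ordering $S_1,\dots,S_r$ of the color classes of $M$ witnessing that the given coloring is standard, so that by Lemma~\ref{lem:equiv}\eqref{eq:iii} each $S_1\cups S_i$ is closed in $M$. Writing $r'=r_M(Z)$, let $i_1<i_2<\cdots<i_{r'}$ be the indices for which $Z\cap S_{i_j}\ne\emptyset$; by hypothesis there are exactly $r'$ such indices, matching $r_{M|Z}(Z)=r_M(Z)=r'$. Set $T_j=Z\cap S_{i_j}$, so that $T_1,\dots,T_{r'}$ is the partition of $Z$ induced by the coloring, each class non-empty. Since $Z\cap S_j=\emptyset$ for $j\le i_k$ with $j\notin\{i_1,\dots,i_k\}$, one has the crucial identity
\[
T_1\cups T_k \;=\; Z\cap\bigl(S_1\cups S_{i_k}\bigr).
\]

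The main step is to show that this set is a flat of $M|Z$. This is a routine fact about restrictions: the closure operator of $M|Z$ satisfies $\operatorname{cl}_{M|Z}(X)=\operatorname{cl}_M(X)\cap Z$ for any $X\subseteq Z$, because $r_{M|Z}(X+e)=r_M(X+e)$ for every $e\in Z$. Applying this with $X=T_1\cups T_k$ and using that $S_1\cups S_{i_k}$ is closed in $M$ by standardness, we get
\[
\operatorname{cl}_{M|Z}(T_1\cups T_k)\subseteq \operatorname{cl}_M(S_1\cups S_{i_k})\cap Z = (S_1\cups S_{i_k})\cap Z = T_1\cups T_k,
\]
so $T_1\cups T_k$ is closed in $M|Z$ for every $k=1,\dots,r'$. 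Together with the non-emptiness of each $T_j$ and the fact that there are exactly $r'=r_{M|Z}(Z)$ color classes, characterization~\eqref{eq:iii} of Lemma~\ref{lem:equiv} applies to $M|Z$ and yields that the inherited coloring on $Z$ is standard.

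I do not expect a genuine obstacle here: the only non-bookkeeping ingredient is the identity $\operatorname{cl}_{M|Z}(X)=\operatorname{cl}_M(X)\cap Z$, and the only subtlety is tracking how the indexing of the colors collapses after intersecting with $Z$, which is handled by the observation displayed above. The role of the hypothesis ``exactly $r_M(Z)$ colors appear in $Z$'' is precisely to guarantee the correct number of classes in the induced partition; without it one could have too few classes and fail the count in~\eqref{eq:iii}.
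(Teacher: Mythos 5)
Your proof is correct and follows essentially the same route as the paper's: both fix an ordering of the color classes with closed prefix unions via Lemma~\ref{lem:equiv}\eqref{eq:iii}, observe that the induced prefix unions on $Z$ equal $(S_1\cups S_{i_k})\cap Z$, and conclude closedness in $M|Z$. You merely spell out the closure identity $\operatorname{cl}_{M|Z}(X)=\operatorname{cl}_M(X)\cap Z$ that the paper leaves implicit.
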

\begin{proof}
Let $S_1, \dots, S_r$ denote the color classes such that $S_1 \cup \dots \cup S_i$ is closed in $M$ for $i=1, \dots, r$ (such an ordering exists by Lemma~\ref{lem:equiv}). Let $1 \le i_1 < i_2 < \dots < i_{r_M(Z)} \le r$ denote the indices of colors appearing in $Z$. Then $(S_{i_1} \cap Z) \cup (S_{i_2} \cap Z) \cups (S_{i_j} \cap Z) =(S_1 \cup S_2 \cups S_{i_j}) \cap Z$ is closed in $M|Z$ for $j=1, \dots, r_M(Z)$, hence the coloring of $M|Z$ with color classes $Z\cap S_{i_1}, \dots, Z\cap S_{i_{r_M(Z)}}$ is standard. 
\end{proof}

The proof of Theorem~\ref{thm:standard} will rely on the following lemma. 

\begin{lem} \label{lem:minor}
Let $F$ be a flat of a matroid $M$. If either the restriction $M|F$ or the contraction $M/F$ has a non-standard rank-preserving coloring without rainbow circuits, then so does $M$.
\end{lem}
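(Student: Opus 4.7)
The plan is to split on which minor supplies the non-standard coloring and, in either case, to extend that coloring to $M$ by combining it with an arbitrary standard rank-preserving rainbow circuit-free coloring of the complementary minor. Writing $U_1,\dots,U_{r_M(F)}$ for the color classes inside $F$ and $T_1,\dots,T_s$ for those inside $S-F$, and $N=N_U\oplus N_T$ for the corresponding partition matroid on $S$, the chain $N\preceq M|F\oplus M/F\preceq M$ (the second inequality uses that $F$ is a flat, so independent sets of the direct sum are independent in $M$) shows that the combined coloring is automatically rank-preserving and rainbow circuit-free on $M$. The whole task then reduces to proving non-standardness.

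The restriction case is short. If the $U_i$ form a non-standard coloring of $M|F$ and the combined coloring were standard on $M$, then since exactly $r_M(F)$ colors (the $U_i$) appear on $Z=F$, Lemma~\ref{lem:restriction} would force the restriction to $F$ to be standard in $M|F$, a contradiction.

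The main work is the contraction case, where the $T_j$ are non-standard in $M/F$. Here Lemma~\ref{lem:restriction} does not apply directly with $Z=S-F$ because typically $r_M(S-F)>r_{M/F}(S-F)$. My approach is to assume the combined coloring is standard in $M$ via some ordering $C_1,\dots,C_r$, read off the induced sub-ordering $T_{\sigma(1)},\dots,T_{\sigma(s)}$ of the $T$-classes, and show that this sub-ordering is in fact standard for $M/F$, contradicting the hypothesis. Let $t_k$ denote the position of $T_{\sigma(k)}$ in the combined ordering, $b_k$ the number of $U$-classes at positions at most $t_k$, $P_{b_k}$ their union, and $X_k=T_{\sigma(1)}\cup\dots\cup T_{\sigma(k)}$. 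Two facts are essential: (i) $P_{b_k}=F_{t_k}\cap F$ is a flat of $M$, as an intersection of flats, of rank $b_k$ by Lemma~\ref{lem:restriction} applied to $Z=F$; and (ii) $F_{t_k}$ is closed in $M$ with $r_M(F_{t_k})=t_k=b_k+k$ by Lemma~\ref{lem:equiv}(ii). For each $e\in(S-F)-X_k$, submodularity gives $r_M(F_{t_k}\cup F+e)\le r_M(F_{t_k}+e)+r_M(F)-r_M(P_{b_k})=r_M(F)+k+1$; on the other hand, picking one element from each $U$-class, one from each $T_{\sigma(j)}$ with $j\le k$, together with $e$, yields an $N$-independent (hence $M$-independent) rainbow subset of $F_{t_k}\cup F+e$ of size $r_M(F)+k+1$, giving the matching lower bound. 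Subtracting $r_M(F)$ produces $r_{M/F}(X_k+e)=k+1>k=r_{M/F}(X_k)$, so each $X_k$ is a flat of $M/F$, and by Lemma~\ref{lem:equiv}(iii) the induced $T$-sub-ordering is standard in $M/F$, the desired contradiction.

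The main obstacle is precisely this contraction case: in a combined standard ordering the $T$-classes can be interleaved with $U$-classes, so the induced $T$-sub-ordering does not visibly come from a chain of flats in $M/F$. What makes the argument go through is the observation that $F_{t_k}\cap F$ is a flat of $M$ of the \emph{expected} rank $b_k$, which is what allows the submodular upper bound and the rainbow lower bound to pinch the rank of $F_{t_k}\cup F+e$ exactly and certify closedness of $X_k$ in $M/F$.
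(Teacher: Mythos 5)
Your proof is correct, and while the setup (extend the given non-standard coloring by a standard coloring of the complementary minor, using $N \preceq_r M|F \oplus M/F \preceq_r M$ to get a rank-preserving rainbow circuit-free coloring of $M$) matches the paper's, your derivation of non-standardness in the contraction case is genuinely different. The paper never works with $M$ and $M/F$ directly: it first shows the combined coloring is non-standard for the direct sum $N = M|F \oplus M/F$ by applying Lemma~\ref{lem:restriction} \emph{to $N$} with $Z = S-F$ (where the interleaving issue you identify disappears, since $N|(S-F) = M/F$ and exactly $r_N(S-F)$ colors appear there), and then rules out standardness for $M$ by a short rank sandwich: if $S_1, \dots, S_r$ were a standard ordering for $M$, then $i \le r_N(S_1 \cups S_i) \le r_M(S_1 \cups S_i) = i$ (the lower bound from rainbow-freeness of the coloring of $N$, the upper from $N \preceq M$), so by Lemma~\ref{lem:equiv} the coloring would be standard for $N$ too. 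Your route instead stays inside $M$ and transfers the chain of flats $F_{t_1} \subseteq \dots \subseteq F_{t_s}$ down to $M/F$ by pinching $r_M(F_{t_k} \cup F + e)$ between a submodular upper bound (using $r_M(F_{t_k} \cap F) = b_k$, which you correctly extract from Lemma~\ref{lem:restriction} applied to $Z=F$) and a rainbow-independent-set lower bound; this is a valid and self-contained argument, at the cost of more bookkeeping ($t_k = b_k + k$, the case analysis on where $e$ lives). What the paper's detour through $N$ buys is uniformity — both the restriction and contraction cases are dispatched by the same two-line application of Lemma~\ref{lem:restriction} plus the rank sandwich — whereas your argument makes explicit the geometric content of why a standard chain in $M$ induces one in $M/F$, which the paper's proof leaves implicit inside the direct-sum formalism.
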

\begin{proof}
Let $N=M/F \oplus M|F$ and $r = r_M(S)=r_N(S)$. Consider a coloring of either $M|F$ or $M/F$ provided by the assumption of the lemma and extend it to $S$ by a standard coloring of $M/F$ or $M|F$, respectively, such that the color sets used on $M|F$ and $M/F$ are disjoint. This way we get a rainbow circuit-free coloring of $N$ with exactly $r$ colors. As $N \preceq_r M$, this also a rainbow circuit-free coloring of $M$. 

The restriction of the coloring to either $N|F$ or $N|(S-F)$ is non-standard by the construction, hence, by Lemma~\ref{lem:restriction}, it is a non-standard coloring of $N$. Suppose for contradiction that it is a standard coloring of $M$. By Lemma~\ref{lem:equiv}, there is an ordering $S_1, \dots, S_r$ of the color classes such that $r_M(S_1 \cups S_i) = i$ for $i=1, \dots, r$. As $N \preceq M$, this implies that $r_N(S_1 \cups S_i) \le r_M(S_1 \cups S_i) = i$.  As it is a rainbow circuit-free coloring of $N$, we have $r_N(S_1 \cups S_i) \ge i$ as well, thus $r_N(S_1 \cups S_i) = i$ for $i=1, \dots, r$. Therefore, it is a standard coloring of $N$ by Lemma~\ref{lem:equiv}, a contradiction.  
\end{proof}

We are ready to prove the theorem. 

\thmstandard*
\begin{proof}

By Corollary~\ref{cor:standard}, it suffices to show that a non-binary matroid $M$ has a non-standard rainbow circuit-free coloring with exactly $r$ colors. As $M$ is non-binary, by Theorem~\ref{thm:tutte}, it has a minor isomorphic to $U_{2,4}$. That is, there exists a flat $F$ of $M$ and a subset $T \subseteq S-F$ such that $(M/F)|T$ is isomorphic to $U_{2,4}$. 
Let $G$ denote the closure of $T$ in $M/F$. As $T$ has rank 2 in $M/F$, $N = (M/F)|G$ is a matroid of rank 2. Let $G = P_1 \cups P_q$ denote the partition of $G$ into classes of parallel elements of $N$. As $N|T$ is isomorphic to $U_{2,4}$, we have $q\ge 4$. Color $G$ with two colors such that the color classes are $P_1 \cup P_2$ and $P_3 \cups P_q$. Since $N$ has rank 2 and parallel elements receive the same color, this is a rank-preserving coloring of $N$ without rainbow circuits. Moreover, neither $P_1 \cup P_2$ nor $P_3 \cups P_q$ is closed in $N$, hence it is a non-standard coloring of $N$. Applying Lemma~\ref{lem:minor} to $M/F$ and to the restriction $N=(M/F)|G$, we get that $M/F$ has a non-standard rank-preserving coloring without rainbow circuits (note that $G$ is closed in $M/F$). Now applying Lemma~\ref{lem:minor} to $M$ and to the contraction $M/F$, we get that $M$ also has a non-standard rank-preserving rainbow circuit-free coloring, concluding the proof of the theorem.  
\end{proof}

\section{Consequences} \label{sec:consequences}

\subsection{Binary matroids and strongly base orderability}

Given two bases $B_1$ and $B_2$ of a matroid, there exists a bijection $\varphi:B_1\to B_2$ with the property that $B_1-x+\varphi(x)$ is a basis for each $x\in B_1-B_2$ (see e.g. \cite[Theorem 5.3.4]{frank2011connections}). A strengthening of this observation was given by Greene and Magnanti \cite{greene1975some}.

\begin{thm}[Greene and Magnanti] \label{thm:gm}
Let $B_1$ and $B_2$ be two bases of a matroid $M$ and let $\{X_1,\dots,X_q\}$ be a partition of $B_1$. Then there is a partition $\{Y_1,\dots,Y_q\}$ of $B_2$ for which $(B_1-X_i)\cup Y_i$ is a basis for $i=1,\dots,q$.
\end{thm}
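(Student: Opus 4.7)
The plan is to deduce the statement from Edmonds' matroid partition theorem. For each $i \in \{1, \dots, q\}$, set $N_i := M/(B_1 - X_i)$; this matroid has rank $|X_i|$, and a subset $Y \subseteq S \setminus (B_1 - X_i)$ is a basis of $N_i$ if and only if $(B_1 - X_i) \cup Y$ is a basis of $M$. Extend each $N_i$ to the common ground set $B_2$ by declaring the elements of $B_2 \cap (B_1 - X_i)$ to be loops; this preserves the rank of every subset of $B_2$ that avoids these loops. The goal becomes to produce a partition $B_2 = Y_1 \sqcup \dots \sqcup Y_q$ with each $Y_i$ independent in $N_i$. Since $\sum_i |Y_i| = |B_2| = |B_1| = \sum_i |X_i|$ and $|Y_i| \le r(N_i) = |X_i|$ for every $i$, such a partition automatically forces $|Y_i| = |X_i|$, making $Y_i$ a basis of $N_i$ and $(B_1 - X_i) \cup Y_i$ a basis of $M$, as required.

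By Edmonds' matroid partition theorem, the desired partition exists if and only if
\[ |T| \le \sum_{i=1}^{q} r_{N_i}(T) \quad \text{for every } T \subseteq B_2. \]
Using the contraction identity $r_{N_i}(T) = r_M((B_1 - X_i) \cup T) - (|B_1| - |X_i|)$, this is equivalent to
\[ \sum_{i=1}^{q} r_M(A_i) \ge (q-1) \cdot |B_1| + |T|, \quad \text{where } A_i := (B_1 - X_i) \cup T. \]
I would verify this inequality by iterating submodularity of the rank function along the nested chain of intersections $P_k := A_1 \cap \dots \cap A_k$. A short set-theoretic computation, using that the $X_i$'s partition $B_1$, shows $P_k \cup A_{k+1} = B_1 \cup T$ and $P_k \cap A_{k+1} = P_{k+1}$ for every $k \ge 1$. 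Since $B_1 \cup T$ spans $M$, $r_M(P_k \cup A_{k+1}) = |B_1|$, so submodularity yields $r_M(A_{k+1}) \ge |B_1| + r_M(P_{k+1}) - r_M(P_k)$.

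Telescoping these inequalities for $k = 1, \dots, q-1$ delivers $\sum_{i=1}^{q} r_M(A_i) \ge (q-1) \cdot |B_1| + r_M(P_q)$, and since $P_q = T$ (the $X_i$'s cover $B_1$) while $T \subseteq B_2$ is independent, $r_M(P_q) = |T|$, completing the verification. The main difficulty is picking the right form of iterated submodularity: a direct pairwise application to the $A_i$'s does not close up neatly, and one must telescope along $P_1 \supseteq P_2 \supseteq \dots \supseteq P_q$ so as to pick up a contribution of exactly $|B_1|$ at each step while paying only with the rank of the shrinking intersection, whose terminal value is $T$ itself.
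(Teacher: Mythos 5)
The paper does not prove this statement; it is quoted verbatim from Greene and Magnanti with a citation, so there is no in-paper argument to compare against. Judged on its own, your proof is correct and complete. The reduction to Edmonds' matroid partition theorem is set up properly: each $N_i=M/(B_1-X_i)$ (with the elements of $B_2\cap(B_1-X_i)$ turned into loops) has rank $|X_i|$ on the ground set $B_2$, the counting argument $\sum_i|Y_i|=|B_2|=\sum_i|X_i|$ with $|Y_i|\le|X_i|$ correctly forces each $Y_i$ to be a basis of $N_i$ and hence $(B_1-X_i)\cup Y_i$ to be a basis of $M$, and the rank-translation $r_{N_i}(T)=r_M(A_i)-(|B_1|-|X_i|)$ is right. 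The submodularity telescoping is also sound: since the $X_i$ partition $B_1$, one gets $P_k=(B_1-(X_1\cup\dots\cup X_k))\cup T$, hence $P_k\cup A_{k+1}=B_1\cup T$ (which spans) and $P_k\cap A_{k+1}=P_{k+1}$, and the chain terminates at $P_q=T$ with $r_M(T)=|T|$ because $T\subseteq B_2$ is independent. This is essentially the standard modern derivation of the Greene--Magnanti exchange theorem from matroid union; the price of this route is that it invokes Edmonds' theorem as a black box, whereas Greene and Magnanti's original argument is more hands-on, but your version is shorter and arguably more transparent about where the inequality $|T|\le\sum_i r_{N_i}(T)$ comes from.
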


Let us make two important observations. First, the roles of $B_1$ and $B_2$ in Theorem~\ref{thm:gm} are not symmetric, hence $(B_2-Y_i)\cup X_i$ might not be a basis of $M$ for some $i$. Second, only single exchanges are possible, hence $(B_1-(X_i\cup X_j))\cup (Y_i\cup Y_j)$ might not be a basis for some pair $i\neq j$.

Strongly base orderable matroids form a class of matroids with distinguished structural properties overcoming the above difficulties. A matroid $M$ with basis-family $\cB$ is \textbf{strongly base orderable} if for any two bases $B_1,B_2\in\cB$ there exists a bijection $\varphi:B_1\to B_2$ with the property that 
\leqnomode
\begin{equation}
(B_1-X)\cup \varphi(X)\in\cB\quad \text{for}\ X\subseteq B_1. \tag{SBO} \label{eq:sbo}
\end{equation}
\reqnomode
Several matroid classes that appear in combinatorial optimization problems are strongly base orderable, such as gammoids (and so partition, laminar, or transversal matroids), while others do not possess such nice characteristics, e.g. graphic or paving matroids.

As a local counterpart of strongly base orderability, we say that two bases $B_1$ and $B_2$ of a matroid are \textbf{locally strongly base orderable} if there exists a bijection $\varphi:B_1\to B_2$ satisfying \eqref{eq:sbo}. With the help of Theorem~\ref{thm:standard}, we give a characterization of locally strongly base orderable basis-pairs of binary matroids.

\thmlsbo*
\begin{proof}
Let $M':=M/(B_1\cap B_2)$ and let us denote the rank of $M'$ by $r'$. Assume first that $\varphi:B_1\to B_2$ satisfies \eqref{eq:sbo}. Consider the coloring of $M'$ with $r'$ color classes of the form $\{x,\varphi(x)\}$ for $x\in B_1-B_2$. By \eqref{eq:sbo}, if we pick exactly one element from every color class then the resulting set is a basis of $M'$, therefore the coloring is rainbow circuit-free. As $M'$ is binary, the coloring is standard by Theorem~\ref{thm:standard}.

For the opposite direction, consider a standard coloring of $M'$ with color classes of size two. By Lemma~\ref{lem:equiv}, there is an ordering $S_1,\dots,S_{r'}$ of the color classes such that $S_j$ consists of parallel elements of $M'/(\bigcup_{k=1}^{j-1} S_k)$ for $j=1,\dots,r'$. As both $B_1-B_2$ and $B_2-B_1$ are bases of $M'$, we get that $|B_i\cap S_j|=1$ for $i=1,2$ and $j=1,\dots,r'$. Let $S_j=\{x^1_j,x^2_j\}$ where $x^i_j\in B_i$, and define
\begin{equation*}
    \varphi(x)=\begin{cases}
    x & \text{if $x\in B_1\cap B_2$},\\
    x^2_j & \textit{if $x=x^1_j$ for some $j\in\{1,\dots,r'\}$}.
    \end{cases}
\end{equation*}
We claim that $\varphi$ satisfies \eqref{eq:sbo}. Indeed, for an arbitrary subset $X\subseteq B_1$, the set $(B_1-X)\cup \varphi(X)$ contains exactly one element from each of the $S_j$s plus the intersection $B_1\cap B_2$. As the coloring was standard for $M'$, these elements form a basis of $M$.

Now we turn to the second half of the theorem. By the first half, there is a standard coloring of $M$ in which each color class has size two. By Corollary~\ref{cor:rainbow}, in every standard coloring of $M$ there exists a color class consisting of parallel elements. However, the matroid is assumed to be simple, a contradiction. 
\end{proof}

\subsection{Covering number and reduction}

The \textbf{covering number} of a matroid $M=(S,\cI)$ is the minimum number of independent sets from $\cI$ needed to cover the ground set $S$.\footnote{The covering number is sometimes called the \emph{coloring} or \emph{chromatic number} of $M$ in the literature. However, it corresponds to a different concept of coloring than the one discussed in the present paper, therefore we use the \emph{covering} version throughout.} We say that a matroid is \textbf{$k$-coverable} if its covering number is at most $k$. The notion of covering number can be straightforwardly extended to matroid intersection: given two matroids $M_1=(S,\cI_1)$ and $M_2=(S,\cI_2)$ over the same ground set, the \textbf{covering number of their intersection} is the minimum number of common independent sets needed to cover $S$.  

B\'erczi et al. \cite{berczi2019list} and independently Im et al. \cite{im2020matroid} investigated the existence of reductions to partition matroids that increase the covering number by only a constant factor. In particular, the following conjecture was proposed in \cite[Conjecture 8]{berczi2019list}.

\begin{conj}[B\'erczi et al.] \label{conj:red}
Every $k$-coverable matroid can be reduced to a $2\cdot k$-coverable partition matroid.
\end{conj}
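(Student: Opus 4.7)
My plan is to exploit the bijection between rainbow circuit-free colorings and partition matroid reductions from Section~\ref{sec:preliminaries}: Conjecture~\ref{conj:red} asks, equivalently, for a rainbow circuit-free coloring of the $k$-coverable matroid $M$ in which every color appears at most $2k$ times. I would attempt to prove it by induction on $|S|$, peeling off a single color class of size at most $2k$ at each step and invoking the inductive hypothesis on the remaining matroid.

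The natural candidate for the peeled class is a cocircuit $C$ of $M$ with $|C|\le 2k$. By matroid orthogonality no circuit of $M$ can meet $C$ in exactly one element, so declaring $C$ one color cannot produce a rainbow circuit together with representatives chosen from the other parts. Moreover $M|(S-C)$ has rank $r_M(S)-1$ (the complement of a cocircuit is a hyperplane) and, being a restriction, is again $k$-coverable, so the induction carries through. For graphic matroids this strategy already succeeds, because Nash-Williams' covering formula forces a vertex of degree at most $2k$, whose incident edges form a cocircuit of the required size.

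The central difficulty is that small cocircuits need not exist in general: the uniform matroid $U_{r,2r}$ is $2$-coverable yet its smallest cocircuit has size $r+1$. Fortunately $U_{r,2r}$ is easy for an orthogonal reason, namely that its girth is $r+1$, so any partition into $r$ parts of size $2$ is automatically rainbow circuit-free. This dichotomy motivates a two-regime strategy: in a large-girth regime I would use a balanced partition (exploiting $|S|\le k\cdot r_M(S)$), while in a small-girth regime I would use cocircuit peeling, possibly after first contracting a suitable dense flat to raise the girth of the quotient.

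The main obstacle I foresee is a uniform structural lemma of the shape \emph{``every loopless $k$-coverable matroid of small girth admits a cocircuit of size at most $2k$''}, together with the requirement that the contraction used to raise girth does not destroy $k$-coverability, since the density ratio $\max_X |X|/r_M(X)$ can strictly increase under contraction. One remedy I would try is to combine a covering $I_1,\dots,I_k$ of $M$ with a dual cocircuit cover and amalgamate consecutive parallel-class steps of the standard-coloring algorithm of Section~\ref{sec:standard} into a single color group whose total size is bounded by $2k$ using the $k$-coverability hypothesis. I suspect this amalgamation step is where the genuine difficulty lies, which is consistent with the conjecture's status as open beyond the special matroid classes handled in~\cite{berczi2019list}.
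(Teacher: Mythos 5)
The statement you set out to prove is Conjecture~\ref{conj:red}, which the paper does \emph{not} prove: it is quoted as an open conjecture of B\'erczi et al., and the paper's contribution around it is negative evidence about natural strengthenings (Theorem~\ref{thm:norpbin} and the corollary following Theorem~\ref{thm:rank}). So there is no proof in the paper to compare yours against, and your proposal --- which you yourself present as a plan hinging on an unproved ``uniform structural lemma'' --- does not establish the statement.

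More concretely, the paper's own Theorem~\ref{thm:norpbin} refutes the key lemma your strategy needs. Your peeling step colors a cocircuit $C$ of the current restriction with one color and recurses on the hyperplane $S-C$; iterating this is precisely the standard-coloring construction of Section~\ref{sec:standard}, and hence always produces a \emph{rank-preserving} reduction. Theorem~\ref{thm:norpbin} exhibits, for every $g$, a $2$-coverable (co-graphic, hence binary) matroid in which every rank-preserving rainbow circuit-free coloring has a color class of size at least $g$: one takes the co-graphic matroid of a $4$-edge-connected graph of girth $g$, whose cocircuits are the cycles of the graph and therefore all have size at least $g$, while its circuits are elementary edge cuts and so its girth stays small. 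This matroid sits squarely in your ``small-girth regime'' yet admits no cocircuit of size at most $2k=4$, so the hoped-for lemma ``every loopless $k$-coverable matroid of small girth admits a cocircuit of size at most $2k$'' is false, and by Corollary~\ref{cor:rainbow} the entire rank-preserving route (not just this particular peeling order) is blocked. Any successful attack must be genuinely non-rank-preserving; the ``amalgamation'' step you flag as the real difficulty is exactly where the problem remains open, which is why the statement is still a conjecture.
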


The motivation of the conjecture is multifold, let us mention here only two illustrious examples. The conjecture would provide a new proof for a celebrated theorem of Aharoni and Berger \cite{aharoni2006intersection} stating that the covering number of the intersection of two matroids $M_1,M_2$ is at most twice the maximum of the covering number of $M_1$ and $M_2$ plus one. Furthermore, by relying on Galvin's list coloring theorem \cite{galvin1995list} for bipartite graphs, the conjecture would imply that even the list covering number of the intersection of two matroids is at most twice its covering number (see \cite{berczi2019list} for further details).

We have already seen that reductions to partition matroids can be identified with rainbow circuit-free colorings. As the covering number of a partition matroid is just the maximum size of the partition classes defining it, this parameter translates into the maximum size of a color class of the coloring. In order to be consistent with the rest of the paper, we follow the coloring terminology, but all results can be rephrased in terms of reductions.

We show that the strengthening of Conjecture~\ref{conj:red} in which the reduction is required to be rank-preserving does not hold. In fact, we prove a much stronger result stating that a constant fraction loss in the rank is unavoidable in certain situations. It is worth mentioning that the results hold already for co-graphic matroids, which is in sharp contrast to the graphic case for which the conjecture was shown to be true even with rank-preserving reduction and with $2\cdot k-1$ in place of $2\cdot k$ \cite{berczi2019list,hoffman2019rainbow,im2020matroid}. In general, we are not aware of any $k$-coverable matroid that does not admit a (not necessarily rank-preserving) $(2\cdot k-1)$-coverable reduction. 

First we show that for certain matroids the covering number of each rank-preserving reduction might be large.

\thmnorpbin*
\begin{proof}
Let $M=(S,\cI)$ be a 2-coverable matroid of co-girth $g$. Such a matroid exists, as shown by the following construction. Every $4$-edge-connected graph $G$ contains two disjoint spanning trees by Nash-Williams' theorem \cite{nash1964decomposition}. The complements of these spanning trees are bases of the co-graphic matroid of $G$ together containing every edge of the graph, thus the co-graphic matroid is $2$-coverable. The co-girth of this matroid is the minimum size of a cycle of $G$. As there exist $4$-edge-connected graphs with arbitrarily large girth (see e.g. \cite{liu1994graphs}), the claim follows.

By Corollary~\ref{cor:rainbow} \eqref{eq:cor1}, any rank-preserving rainbow-free coloring of $M$ has a color class that forms a cut of $M$ and thus has size at least $g$, concluding the proof.
\end{proof}

Theorem~\ref{thm:norpbin} implies that Conjecture~\ref{conj:red} does not hold when restricted to rank-preserving reductions. Based on the following technical statement, we will show that even a constant factor of the rank might be lost if the covering number of the reduction is bounded by $2\cdot k-1$.

\thmrank*
\begin{proof}
Let $S_1,\dots,S_q$ denote the color classes of a rainbow circuit-free coloring of $M$ with $|S_i|\leq g-1$ for $i=1,\dots,q$. Define $\cC_i:=\{C\in\cC:\ |S_i\cap C|\geq 2\}$. As the coloring is rainbow circuit-free, for each $C\in\cC$ there exists a color that appears at least twice in $C$, hence $\cC=\cC_1\cups\cC_q$. By the assumptions of the theorem, $|\cC_i|\leq g-2$, hence
\begin{align*}
    |\cC|
    {}&{}\leq
    |\cC_1|+\dots+|\cC_q|\\
    {}&{}\leq
    (g-2)\cdot q,
\end{align*}
implying the the lower bound $|\cC|/(g-2)$ on $q$. Observe that $|\cC_i|\leq|S_i|-1$, hence
\begin{align*}
    |\cC|
    {}&{}\leq
    |\cC_1|+\dots+|\cC_q|\\
    {}&{}\leq
    |S_1|-1+\dots+|S_q|-1\\
    {}&{}= 
    |S|-q,
\end{align*}
implying the upper bound $|S|-|\cC|$ on $q$.
\end{proof}

\corrank*
\begin{proof}
For $r\leq 6$ the corollary obviously holds as $\lceil 6/7\cdot r\rceil=r$ and every rainbow circuit-free coloring of a matroid uses at most $r$ colors. 

Hence we may assume that $r=7\cdot q+j-1$ for some $q\geq 1$ and $1\leq j\leq 7$. Let $G=(S,T;E)$ denote the complete bipartite graph on seven vertices with $|S|=3$ and $|T|=4$. Let $T=\{u,v,w,z\}$ denote the vertices in the larger vertex class. For $q\geq 1$, let us construct a graph $G_q$ as follows. Take $q$ disjoint copies of $G$, where the vertex classes and edge set of the $i$th copy are denoted by $S_i,T_i=\{u_i,v_i,w_i,z_i\}$ and $E_i$, respectively. Furthermore, connect the subsequent copies by adding the edges $w_iv_{i+1}$ and $z_iu_{i+1}$ to $G_q$ for $i=1,\dots,q-1$). Furthermore, let $G^j_q$ denote the graph obtained from $G_q$ by attaching $j$ vertices to it through vertices $w_q,z_q$ (see Figure~\ref{fig:complete_graph}). 

\begin{figure}
    \centering
    \includegraphics[width=\textwidth]{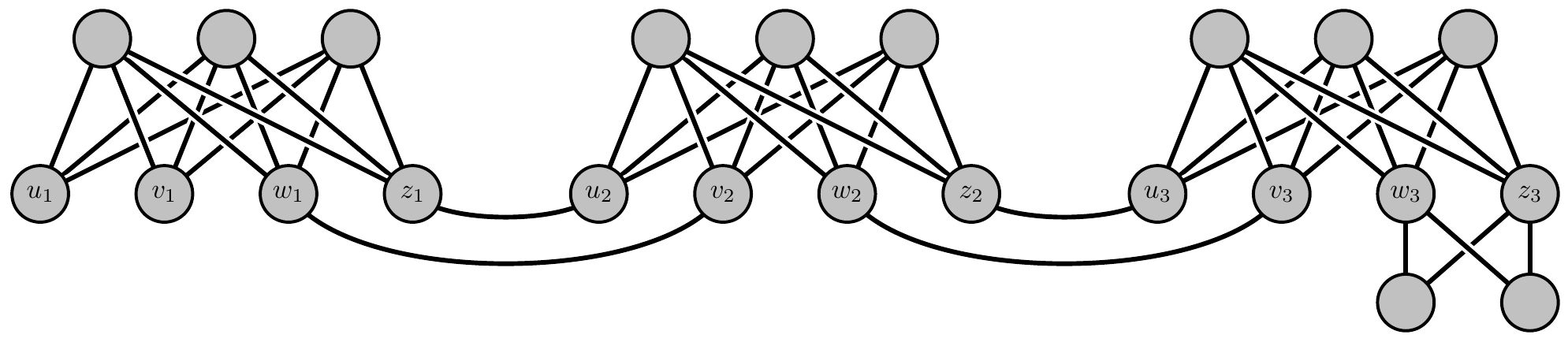}
    \caption{Construction of the graph $G^j_q$ for $q=3$ and $j=2$, corresponding to $r=22$}
    \label{fig:complete_graph}
\end{figure}

Observe that $G^j_q$ is the disjoint union of two spanning trees; this follows from the construction and from the fact that $G$ is the disjoint union of two spanning trees. This implies that the co-graphic matroid $M=(S,\cI)$ of $G^j_q$ is connected, $2$-coverable, and has rank $7\cdot q+j-1=r$. Recall that a circuit of $M$ corresponds to an elementary cut of $G^j_q$. Define
\begin{equation*}
    \cC:=\big\{\delta(v):\ v\ \text{is a vertex of $G^j_q$}\big\}\cup\big\{\{w_iv_{i+1},z_iu_{i+1}\}:\ i=1,\dots,q-1\big\}.
\end{equation*}
We claim that $\cC$ satisfies the conditions of Theorem~\ref{thm:rank} with $g=4$. Indeed, any pair of edges is contained in at most one member of $\cC$. If we consider triples $F$ of edges instead, then $|F\cap C|\geq 2$ can hold either for two members of $\cC$ of the form $\delta(v)$, or a member of $\cC$ of the form $\delta(v)$ and another member of the form $\{w_iv_{i+1},z_iu_{i+1}\}$. This follows from the facts that the shortest cycle in $G$ has length $4$ and the end vertices of the edges participating in the cuts $\{w_iv_{i+1},z_iu_{i+1}\}$ are at a distance of at least two.   

By Theorem~\ref{thm:rank}, the number of colors in any rainbow circuit-free coloring of $M$ is at most 
\begin{align*}
    |S|-|\cC|
    {}&{}=[12\cdot q+2\cdot (q-1)+2\cdot j]-[7\cdot q+j+q-1]\\
    {}&{}=6\cdot q+j-1\\
    {}&{}\leq
    \lceil\frac{6}{7}\cdot (7\cdot q+j-1)\rceil\\
    {}&{}=
    \lceil\frac{6}{7}\cdot r\rceil,
\end{align*}
where the last inequality holds by $1\leq j\leq 7$. This concludes the proof of the corollary.
\end{proof}

\subsection{Graphic matroids}

Conjecture~\ref{conj:red} was verified for the case of graphic matroids even when the reduction is required to be rank-preserving and with $2\cdot k-1$ in place of $2\cdot k$ \cite{berczi2019list,im2020matroid}. However, the characterization of the existence of a (not necessarily rank-preserving) reduction that does not increase the covering number of the matroid remains an intriguing open question.

As an easy case, consider a graph $G$ that can be partitioned into $k$ disjoint spanning trees. Let $M$ be the graphic matroid of $G$ and let $r$ denote its rank. Clearly, the covering number of $M$ is $k$. If $M$ has a rainbow circuit-free coloring that uses each color at most $k$ times, then every color class must have size exactly $k$. By Lemma~\ref{lem:equiv}, there exists an ordering $S_1,\dots,S_r$ of the color classes such that $S_i$ consists of $k$ parallel edges in the graph obtained from $G$ by contracting the edges in $S_1\cup\dots\cup S_{i-1}$. That is, the graph can be reduced to a single vertex by a sequence of contractions of $k$ parallel edges. The reverse direction also holds: if such a sequence of contractions exists for $G$, then coloring the contracted sets by different colors we get a rainbow circuit-free coloring of $M$. 

The problem becomes significantly more difficult if the graph is not the disjoint union of spanning trees. In the followings, we give partial results for the case when the edge set can be decomposed into two forests. A graph $G=(V,E)$ is \textbf{$(2,3)$-sparse} if $|E[X]| \le 2\cdot |X|-3$ holds for $X \subseteq V$, $|X|\ge 2$, and \textbf{$(2,3)$-tight} if, in addition, $|E|=2\cdot |V|-3$ holds. Let $G_0 = (V,E_0)$ denote the complete graph on $V$. The edge sets of $(2,3)$-sparse subgraphs of $G$ form the independent sets of a matroid on $E_0$, the so-called \textbf{rigidity matroid} of $G_0$. The bases of the rigidity matroid are the $(2,3)$-tight graphs on $V$, the so-called \textbf{minimally rigid} or \textbf{Laman graphs}. Minimally rigid graphs are exactly the graphs that can be obtained from $K_2$ (an edge) by the so-called \textbf{Henneberg construction}, that is, by a sequence of the following operations: (H0) adding a new vertex $z$ and edges $uz$, $vz$ for already existing, distinct vertices $u, v$, and (H1) deleting an already existing edge $uv$, and adding a new vertex $z$ and edges $uz$, $vz$, $wz$ for already existing, distinct vertices $u, v, w$. \cite{henneberg1911graphische, laman1970graphs}

\thmsparse*
\begin{proof}
Let $G=(V,E)$ be a simple graph such that its graphic matroid $M$ has a rainbow circuit-free coloring using each color at most twice. Let $X \subseteq V$ be a subset such that $|X|\ge 2$. As each rainbow colored set is independent in $M$, at most $r_M(E[X])$ colors appear in $E[X]$, thus $|E[X]| \le 2\cdot r_M(E[X]) \le 2\cdot (|X|-1) = 2\cdot |X|-2$. If $|E[X]| = 2\cdot |X|-2$ holds, then we get a rank-preserving rainbow circuit-free coloring of the restriction $M|E[X]$ such that each color class has size exactly two. As $M|E[X]$ is simple, this contradicts Corollary~\ref{cor:standard}. Hence $|E[X]| \le 2\cdot |X|-3$, and the graph is $(2,3)$-sparse as stated.
\end{proof}

By Theorem~\ref{thm:sparse}, each simple graph $G=(V,E)$ whose graphic matroid has a rainbow circuit-free coloring using each color at most twice has at most $2\cdot |V|-3$ edges. The next theorem characterizes simple graphs having such a coloring with exactly $2\cdot |V|-3$ edges. 

\thmho*
\begin{proof}
It is clear that if a graph can be constructed from $K_2$ by (H0) operations, then its graphic matroid has a rainbow circuit-free coloring using each color at most twice. Indeed, if we apply a (H0) operation to a graph and with a given rainbow circuit-free coloring and color the two new edges with the same new color, then we get a rainbow circuit-free coloring of the graphic matroid of the resulting graph.

To show the reverse direction, let $M$ be the graphic matroid of a simple graph $G$ which has such a coloring, and let $r$ denote the rank of $M$. As $|E|=2\cdot|V|-3 \ge 2\cdot r-1$ and each color is used at most twice, the coloring is rank-preserving. Therefore, by Theorem~\ref{thm:standard} and Lemma~\ref{lem:equiv}, there is an ordering $S_1, \dots, S_r$ of the color classes such that $S_i$ consists of parallel elements of $M/(S_1 \cups S_{i-1})$. As $M$ is simple, $|S_1| = 1$, hence $|S_2| = \dots =|S_r| = 2$. We claim that there is a labeling $v_0, v_1, \dots, v_r$ of $V$ such that $S_1 \cups S_i = E_G[\{v_0, \dots, v_i\}]$ for $i = 1, \dots, r$. Indeed, let $S_1 = \{v_0v_1\}$, and if we already defined $v_0, v_1, \dots, v_i$, then $S_{i+1}$ consists of two parallel edges of the graph obtained from the simple graph $G$ by the contraction of the vertices $\{v_0, \dots, v_i\}$. Hence the two edges of $S_i$ have a common vertex $v_{i+1} \not \in \{v_0,  \dots, v_i\}$ and the other endpoint of these edges are in $\{v_0, \dots, v_i\}$. Therefore, $G$ can be constructed by (H0) operations from the edge $v_0v_1$ such that the vertices are added in order $v_2, \dots, v_r$.  
\end{proof}

As a consequence, we get that the converse of Theorem~\ref{thm:sparse} does not hold as not every minimally rigid graph can be constructed using only (H0) operations.

\section*{Acknowledgements}

Krist\'of B\'erczi was supported by the Lend\"ulet Programme of the Hungarian Academy of Sciences -- grant number LP2021-1/2021 and by the Hungarian National Research, Development and Innovation Office -- NKFIH, grant number FK128673.
``Application Domain Specific Highly Reliable IT Solutions'' project  has been implemented with the support provided from the National Research, Development and Innovation Fund of Hungary, financed under the Thematic Excellence Programme TKP2020-NKA-06 (National Challenges Subprogramme) funding scheme.

\bibliographystyle{abbrv}
\bibliography{rainbow}

\begin{thebibliography}{10}

\bibitem{aharoni2006intersection}
R.~Aharoni and E.~Berger.
\newblock The intersection of a matroid and a simplicial complex.
\newblock {\em Transactions of the American Mathematical Society},
  358(11):4895--4917, 2006.

\bibitem{berczi2019list}
K.~B{\'e}rczi, T.~Schwarcz, and Y.~Yamaguchi.
\newblock List colouring of two matroids through reduction to partition
  matroids.
\newblock {\em arXiv preprint arXiv:1911.10485}, 2019.

\bibitem{crapo1968combinatorial}
H.~H. Crapo and G.-C. Rota.
\newblock {\em Combinatorial Geometries}.
\newblock MIT press Cambridge, Mass., 1968.

\bibitem{frank2011connections}
A.~Frank.
\newblock {\em Connections in combinatorial optimization}, volume~38.
\newblock OUP Oxford, 2011.

\bibitem{galvin1995list}
F.~Galvin.
\newblock The list chromatic index of a bipartite multigraph.
\newblock {\em Journal of Combinatorial Theory, Series B}, 63(1):153--158,
  1995.

\bibitem{greene1975some}
C.~Greene and T.~L. Magnanti.
\newblock Some abstract pivot algorithms.
\newblock {\em SIAM Journal on Applied Mathematics}, 29(3):530--539, 1975.

\bibitem{henneberg1911graphische}
L.~Henneberg.
\newblock {\em Die graphische Statik der starren Systeme}.
\newblock Teubner, Leipzig, 1911.

\bibitem{hoffman2019rainbow}
D.~Hoffman, P.~Horn, P.~Johnson, and A.~Owens.
\newblock On rainbow-cycle-forbidding edge colorings of finite graphs.
\newblock {\em Graphs and Combinatorics}, 35(6):1585--1596, 2019.

\bibitem{im2020matroid}
S.~Im, B.~Moseley, and K.~Pruhs.
\newblock The matroid intersection cover problem.
\newblock {\em Operations Research Letters}, 2020.

\bibitem{kung1986weak}
J.~P. Kung and H.~Q. Nguyen.
\newblock Weak maps.
\newblock In N.~White, G.-C. Rota, and N.~M. White, editors, {\em Theory of
  matroids}. Cambridge University Press, Cambridge.

\bibitem{laman1970graphs}
G.~Laman.
\newblock On graphs and rigidity of plane skeletal structures.
\newblock {\em Journal of Engineering Mathematics}, 4(4):331--340, 1970.

\bibitem{liu1994graphs}
J.~Liu and H.~Zhou.
\newblock Graphs and digraphs with given girth and connectivity.
\newblock {\em Discrete Mathematics}, 132(1-3):387--390, 1994.

\bibitem{lucas1974properties}
D.~Lucas.
\newblock Properties of rank preserving weak maps.
\newblock {\em Bulletin of the American Mathematical Society}, 80(1):127--131,
  1974.

\bibitem{lucas1975weak}
D.~Lucas.
\newblock Weak maps of combinatorial geometries.
\newblock {\em Transactions of the American Mathematical Society},
  206:247--279, 1975.

\bibitem{nash1964decomposition}
C.~S.~J. Nash-Williams.
\newblock Decomposition of finite graphs into forests.
\newblock {\em Journal of the London Mathematical Society}, 1(1):12--12, 1964.

\bibitem{nishimura2009lost}
H.~Nishimura and S.~Kuroda.
\newblock {\em A Lost Mathematician, Takeo Nakasawa: The Forgotten Father of
  Matroid Theory}.
\newblock Springer Science \& Business Media, 2009.

\bibitem{tutte1958homotopyi}
W.~Tutte.
\newblock A homotopy theorem for matroids, {I}.
\newblock {\em Transactions of the American Mathematical Society},
  88(1):144--160, 1958.

\bibitem{tutte1958homotopyii}
W.~Tutte.
\newblock A homotopy theorem for matroids, {II}.
\newblock {\em Transactions of the American Mathematical Society},
  88(1):161--174, 1958.

\bibitem{whitney1992abstract}
H.~Whitney.
\newblock On the abstract properties of linear dependence.
\newblock In {\em Hassler Whitney Collected Papers}, pages 147--171. Springer,
  1992.

\end{thebibliography}

\end{document}